\def\dbF{\hbox{\rm l\negthinspace F}}
\def\dbP{\hbox{\rm l\negthinspace P}}
\def\dbR{\hbox{\rm l\negthinspace R}}
\def\be{\begin{equation}}
\def\ee{\end{equation}}
\newtheorem{lemma}{Lemma}
\newtheorem{example}{Example}
\newtheorem{theorem}{Theorem}
\newtheorem{definition}{Definition}
\newtheorem{proposition}{Proposition}
\begin{document}

\title{Risk-Sensitive Mean-Field-Type Games with $L^p-$norm Drifts} 

\author{Hamidou Tembine \\ Learning \& Game Theory Lab\\ Division of Engineering  \\ New York University Abu Dhabi}
\date{September 22, 2014}
                                          
\maketitle

\begin{abstract}                          
We study how risk-sensitive players act in situations where the outcome is influenced not only by the state-action profile but also by the distribution of it. In such interactive decision-making problems, the classical mean-field game framework does not apply. We depart from most of the mean-field games  literature by presuming that a decision-maker may include its own-state distribution in its decision.  This leads to the class of mean-field-type games. In mean-field-type situations, a single decision-maker may have a big impact on the mean-field terms for which  new type of optimality equations are derived. We establish a finite dimensional stochastic maximum principle for mean-field-type games where the drift functions have a p-norm structure which weaken the classical Lipschitz and differentiability assumptions. Sufficient optimality equations are established via Dynamic Programming Principle but in infinite dimension. Using de Finetti-Hewitt-Savage theorem, we show  that a propagation of chaos property with virtual particles  holds for the non-linear McKean-Vlasov dynami1cs.   

\end{abstract}

\section{Introduction}

Recently, there has been a renewed interest in optimization and game problems   of mean-field type, where the performance functionals, drifts, diffusions, and jump coefficients depend not only on the state and the control but also on the probability distribution of state-control pair. Most formulations of mean-field type optimization in \cite{b1,b2,b3,hosking,li,alain} have been of risk-neutral type where the performance functionals are the expected values of stage-additive cost functions of Bolza or Mayer type. Not all behavior, however, can be captured by risk-neutral mean-field type optimizations. One way of capturing risk-averse and risk-seeking behaviors is by exponentiating the performance functional before expectation (see \cite{Jac73}). The objective of a risk-sensitive player is then to optimize an exponentiated long-term loss.  The risk-sensitive criterion is related to the robust control via  relative entropic measures. As the risk-sensitive parameters vanish, one gets a risk-neutral maximum principle of mean-field-type. 

\subsection{On Mean-Field Games}
There are several pioneer works on  static and/or  stationary mean-field games. Most of them are under different names such as global games, anonymous games, aggregative games, population games, large games, etc, but share lot of common features. Here we limit ourselves to the  some pioneer works on dynamic mean-field games. One of the first works on mean-field games is \cite{jova82}. Therein, the author proposes a game-theoretic model that explains why smaller firms grow faster and are more likely to fail than larger firms in large economies.
The game is played over a  discrete time space. The mean-field  is the aggregate demand/supply which generates a price dynamics.  The price moves forwardly, and the players react to the price and generate a demand and the firm a supply with associated cost, which regenerates the next price and so on. The author  introduced a system of   backward-forward system to find equilibria (see for example  Section 4, equation D.1 and D.2 in \cite{jova82}). The backward equilibrium equation is obtained as an optimality to the individual response, i.e., the value function associated with the best response to price, and the forward equation for the evolution of price. Therein, the consistency check is about the mean-field of equilibrium actions (population or mass of actions), that is, the equilibrium price solves a fixed-point system: the price regenerated after the reaction of the players through their individual best-responses should be consistent with the price they responded to. Following that analogy,  a more general framework was developed  in \cite{rosenthal1988}, where the mean-field equilibrium is introduced in the content of Markovian dynamic games with large number of decision-makers. A mean-field equilibrium is defined in page 4 of \cite{rosenthal1988} by two conditions:  (i)  each generic player's action is best-response  to the mean-field, and (ii) the mean-field is consistent and is exactly reproduced from the reactions of the players. This matching argument was widely used in the literature as it can be interpreted as a generic player reacting to an evolving mean-field object and at the same time the mean-field is formed from the contributions of all the players. The authors of  \cite{bergin}  show that  show how common noise can be introduced into the mean-field game 
model (so the mean-field distribution evolves stochastically) and extend the Jovanovic-Rosenthal existence theorem.
The methodology developed in \cite{jova82} and  the subsequent series of papers \cite{rosenthal1988,bergin,benamou,benamou2,pierrelouis,peter} share the following assumptions:
\begin{itemize}
\item (Big size) There is a large number of decision-makers, sometimes, infinite, or a continuum of decision-makers.
\item (Anonymity) The index of the  decision-maker does not affect the utility.  
\item (NonAtomicity) A single decision-maker has a negligible effect on the mean-field-term and on the utility.
\end{itemize}

Unfortunately, some of the above conditions appear to be very restrictive in terms of applications, and we explain below how to relax them via mean-field-type game theory.

\subsection{Related works on Mean-Field-Type Game Theory}
\subsubsection*{One decision-maker}
A stochastic maximum principle (SMP) for the risk-sensitive optimal control problems for Markov diffusion processes with an exponential-of-integral performance functional was elegantly derived in \cite{lim} using the relationship between the SMP and the Dynamic Programming Principle (DPP) which expresses the first order adjoint process as the gradient of the value-function of the underlying control problem. This relationship holds only when the value-function is smooth (see Assumption (B4) in \cite{lim}). The approach of \cite{lim} was widely used and extended to jump processes in \cite{shi-1} and \cite{shi-2,recent3},  but still under this smoothness assumption. However, in many cases of interest, the value function is, in the best case, only continuous. Moreover, the relationship between the SMP and the DPP is unclear for non-Markovian dynamics and for mean-field type game problems where the Bellman optimality principle need to be extended. This calls for the need to find a risk-sensitive SMP and DPP for these cases. 
Djehiche et al. (2014, \cite{Boualem2014,alainjoint}) have  established a stochastic maximum principle for  risk-sensitive mean-field-type control where the key mean-field term is the {\it mean state.}   This means that the drift, diffusion, running cost and terminal cost functions  depend on the state, the control and on the  mean of  state. Our work extends the results of \cite{lim} to risk-sensitive control problems for dynamics that are non-Markovian and of mean-field type. One important contribution of \cite{Boualem2014} is that the derivation of the SMP does not require any (explicit) relationship between the first-order adjoint process and a value-function of an underlying control problem.  Using the SMP derived in \cite{hosking}, the approach is easily extended to the case where the mean-field coupling is in terms of the mean of the state and the control processes.
 In \cite{Boualem2014t2}, we have extended the methodology   to risk-sensitive mean-field-type control under partial observation which has interesting applications in risk-sensitive filtering problems including mean-field ensemble Kalman filtering, state tracking and other data assimilation algorithms in geosciences.

\subsubsection*{Two or more  decision-makers}
 The first paper that deals with risk-sensitive  games in a mean-field context is \cite{tembine2014}. Therein, we have derived a verification theorem for a risk-sensitive mean-field game whose underlying dynamics is a Markov diffusion, using a matching argument between a system of Hamilton-Jacobi-Bellman (HJB) equations and the Fokker-Planck equation. This matching argument freezes the mean-field coupling in the dynamics, which yields a  risk-sensitive HJB equation for the value-function. The mean-field coupling is then retrieved through the Fokker-Planck equation satisfied by the marginal law of the optimal state.   The work in \cite{tembine2014} is fundamentally different than the present work. Therein, the mean-field term is frozen to be the equilibrium mean-field term and a single decision-maker cannot influence the mean-field-term. In the present work, we shall show that, when a single decision-maker has a non-negligible effect in the mean-field, the fundamental optimality equations are changed. 
 In \cite{Boualem2014t3} we have analyzed risk-sensitive linear-exponentiated quadratic  games of mean-field-type for which we have provided closed-form expressions using a novel risk-sensitive stochastic maximum principle  derived in  \cite{Boualem2014} which does not use the value function. It allows us, in particular,  to work with the SMP equations in situations where the value function is not  necessarily differentiable.

Substantial progress have been done in the last decade in
mean-field games in the non-cooperative setup. However,
very little is known about cooperative mean-field games.
In \cite{ifac2014} we have introduced cooperative mean-field type games in which the state dynamics and the payoffs depend
not only on the state and actions but also on their probability measure. We establish a time-dependent payoff allocation procedure for coalitions of mean-field type. The
allocated payoff considers not only fairness property but also the cost of making the coalition. Both time consistency and subgame perfectness solution concept equations are established.
\subsection{Mean-field-type games: additional features  }

Risk-sensitive mean-field-type games \cite{automat} are fundamentally   different than risk-sensitive mean-field games. 
In the mean-field game-theoretic  models it is usually assumed that  (i) very large number of players, (ii) players are indistinguishability (in the sense of  the strategies, payoffs, state laws), (iii) individual contribution to the mean-field term is assumed to be negligible.   In {\it mean-field-type games}, none of the assumptions (i)-(iii) is needed. Following \cite{ifac2014}, a mean-field-type game is defined as any game in which the payoff and/or state dynamics involve not only the state and action profiles and also the distribution of the sate-action pair (or its marginals such as distribution of states and distribution of actions). Mean-field-type game theory is suitable for one, two or more players. A typical example is, a single decision-maker with mean-variance payoff. 
In mean-field-type games: (i) a single player can have a big influence on the mean-field term. A typical example is an Air Conditioning system   which  tries to reduce 
the variance of the temperature state with the respect the desired comfort temperature of the user. That the context, there is only one decision-maker, the user, who acts on the controller. The control variable is  between $\{Heating, Cooling, Nothing\}.$  Clearly, the control action has significant impact on the variance of the temperature.
(ii)  there is no need for players to be indistinguishable (see Section \ref{sec:game}).
(iii) there is no need to have large number (or infinite or continuum) of players. 
The mean-field-type game framework allows us to address more interesting real-world applications where the number of decision-makers may be large but still finite  and include both von Neumann and non-von Neumann utility functions.

\subsection{Novelty and Contribution}

Our contribution can be summarized as follows. 
We start with one player risk-sensitive mean-field-type optimization where the state dynamics has $L^p-$norm structure, which is not differentiable. Our main motivation for considering  this structure comes from its applications for the control of virus spread among interactive communities (networks) as observed in \cite{rachev}. This allows us to consider other types of non-linear mean-field  interactions that are not investigated in the literature of mean-field games. It also allow us to consider weakened Lipschitz conditions and non-differentiable drift coefficients. We show that the non-differentiability issue can be handled using weak derivatives or sub-differential set. We derive a stochastic maximum principle and a dual game variable which satisfies the risk-sensitive SMP whenever the associates weak derivatives make sense. In addition, a risk-sensitive DPP is provided in infinite dimension.
We believe that the present paper is the first work that analyzes risk-sensitive mean-field-type games with the $L^p-$norm which is non-differentiable.

\subsection{Structure of the paper}

The paper is organized as follows. In Section \ref{sec:oneplayer}, we present the model and state the main results for one player.  Section \ref{sec:game} presents risk-sensitive mean-field type games with two or more players. We provide a dynamic programming principle in infinite dimension in subSection \ref{dppgame}. Section \ref{sec:virus} focuses on the control of virus spread among interactive communities (networks).
 Section \ref{sec:conclusion} concludes the paper.  For completeness, we provide  in Appendix the de Finetti-Hewitt-Savage theorem and the existence and uniqueness proofs.

To streamline the presentation, we only consider the one-dimensional state case. The extension to the multidimensional case is by now straightforward. The norm is denoted with the  index $\alpha\geq 1$ and $p$ will be used for the adjoint process in the stochastic maximum principle.  Also, it should be noted that our diffusion coefficient is control  independent. More general state, control and mean-field dependent diffusions are  carried out in \cite{Boualem2014}. Also the technique developed here can be easily extended to the jump-diffusion case using the works in \cite{recent1,recent2,Karoui-Ham}.

\section{Mean-field-type game with one risk-sensitive decision-maker}\label{sec:oneplayer}

 Let $T>0$ be a fixed time horizon, $\alpha\geq 1$ and $(\Omega,{\mathcal{F}},\dbF, \dbP)$  be a given filtered
probability space on which a one-dimensional standard
Brownian motion $B=\{B_s\}_{s\geq0}$ is given, and the filtration $\dbF=\{{\mathcal{F}}_s,\ 0\leq s \leq T\}$ is the natural filtration of $B$ augmented by $\dbP-$null sets of ${\mathcal F}.$
  We consider the following risk-sensitive problem :
\begin{equation}\label{SDEu}
\left\{\begin{array}{lll}
\bar{J}^{\theta}(u(\cdot))\\
=\frac{1}{\theta}\log\left( Ee^{\theta\left[\int_0^Tf(t,x^u(t),m^u(t), u(t))\,dt+ h(x^u(T),m^u(T))\right]}\right),\\
\inf_{u} \bar{J}^{\theta}(u)\ \\
\mbox{subject to}\\
dx^u(t)=\bar{b}(.,t,x^u(t), m^u(t), u(t)) \ dt+ \sigma(.,t,x^u(t))dB(t), \\ x^u(0)=x_0,\
m^u(t):=\mathcal{L}(x^u(t)),
\end{array}\right.
\end{equation}
where  the state space is $\mathcal{X}=\mathbb{R},$   the term $\bar{b}$ is distribution-dependent and
 has the special structure $$\bar{b}=\left(\int_{y\in \mathcal{X}}  |b|^{\alpha}(., t,x^u(t), y, u(t)) m^u(t, dy)\right)^{\frac{1}{\alpha}},$$ i.e., the $L^{\alpha}-$norm of $b$ with the respect to the measure $m^u(t, .).$  
\begin{equation*}
\bar{b}(t,x,m,u) : \ [0,T] \times \mathcal{X}\times\mathcal{P}(\mathcal{X})\times U\longrightarrow \dbR,\ 
\end{equation*}
$t\in[0, T],\ x\in \dbR,\ m\in \mathcal{P}(\mathcal{X}),\ u \in U.$ Notice that for $\alpha>1$ the drift term $\bar{b}$ is non-linear in the measure $m.$
\begin{equation*}
{b}(., t,x,y,u) : \ [0,T] \times \mathcal{X}^2\times U\longrightarrow \dbR,\
\end{equation*}
\begin{equation*}
\sigma(., t,x) : \ [0,T] \times \mathcal{X}\longrightarrow \dbR,\
\end{equation*}
$m^u(t):=\mathcal{L}(x^u(t)):=P_{x^u(t)}$ is the probability law of the random variable $x^u(t).$ 
The parameter $\theta$ is the risk-sensitivity index of the player. The instantaneous cost function is 
\begin{equation*}
f(t,x,m,u): \ [0,T] \times \mathcal{X} \times\mathcal{P}(\mathcal{X}) \times U\longrightarrow \dbR,\
\end{equation*} and the terminal cost function is 
\begin{equation*}
 h(x,m): \,\,\mathcal{X}\times \mathcal{P}(\mathcal{X}) \longrightarrow \dbR.
\end{equation*}
The control strategy $u$ is chosen by the decision-maker. An admissible control strategy $u$ is an $\dbF$-adapted and $L^{ \alpha^*}$-integrable process with values in a non-empty subset $U$ of $\dbR^d$. We denote the set of all admissible strategies of the player by $\mathcal{U}$.

\begin{definition}  

A mean-field-type game is a game in which the payoff and/or state dynamics involve not only the state-action profiles and also the distribution of the sate-action pair (or its marginals such as distribution of states and distribution of actions).
\end{definition}

\begin{example}
Problem (\ref{SDEu}) is a  mean-field-type game with one decision-maker. The optimality equation of  (\ref{SDEu})  is a nonstandard system from mean-field-type optimal control \cite{alainbook}.

\end{example}
 Given an admissible strategy $u\in\mathcal{U}$ of the decision-maker (player), the state equation in (\ref{SDEu}) is a measure-dependent stochastic differential equation (SDE) with random coefficients. 

In view of (\ref{SDEu}), up to a change of  the parameter $\theta$ into $-\theta$, the optimization of $\bar{J}^{\theta}$ is the same as the following optimization problem 
\begin{equation}\label{rs-cost}
J^{\theta}(u(\cdot))=Ee^{\theta\left[\int_0^Tf(t,x^u(t), m^u(t), u(t))\,dt+ h(x^u(T),m^u(T))\right]},
\end{equation}

 Any $\bar u(\cdot)\in {\mathcal{U}}$ satisfying
\be\label{rs-opt-u}
  J^{\theta}(\bar u(\cdot))=\inf_{u(\cdot)\in {\mathcal{U}}}J^{\theta}(u(\cdot)),
\ee
is called a risk-sensitive optimal strategy. The corresponding state process, solution of the SDE in (\ref{SDEu}), is denoted by $\bar x(\cdot):=x^{\bar u}(\cdot)$.
The mean-field-type optimization problem  that we are interested in,  is to characterize the pair $(\bar x,\bar u)$ solution of the  problem (\ref{SDEu}).  Let $\Psi_T=\int_0^T f(t, x(t),m^u(t), u(t)) dt+h(x(T), m^u(T))$. Then the risk sensitive loss functional is given by 
$$
 \bar{J}^{\theta}=\frac{1}{\theta}\log J^{\theta}   = \frac{1}{\theta}\log \left[ E e^{\theta \Psi_T} \right].
$$
When the risk-sensitive index $\theta$ is small, the loss functional $\bar{J}^{\theta}$ can be expanded as
$$
E[ \Psi_T] +\frac{\theta}{2}\mbox{var}(\Psi_T)+O(\theta^2),
$$
where,  $\mbox{var}(\Psi_T)$ denotes the variance of  $\Psi_T$. If $\theta<0$ , the variance of $\Psi_T$, as a measure of risk,  improves the performance, in which case the optimizer is called {\it risk seeker}. But,  when $\theta>0$,  the variance of $\Psi_T$ worsens the performance $\bar J_{\theta}$,   in which case the optimizer is called {\it risk averse}.
The risk-neutral loss functional $E[ \Psi_{T}]$ can be seen as a limit of risk-sensitive functional $ \bar J_{\theta}$ when $\theta\rightarrow 0$. This is one of the reasons why this criterion attracted lots of attention. The criterion  has also interesting connections with  $H_{\infty}-$mean-field-type optimization. This is easily viewed from the Donsker-Varadhan formula:
\be
\frac{1}{\theta}\log\left(  \int e^{\theta \phi} \ d\nu\right)=\sup_{\mu\in \mathcal{P}(\Omega)}\left[ \int {\phi} \ d\mu -\frac{1}{\theta} \tilde{H}(\mu | \ \nu) \right],
\ee
for any measurable bounded function $\phi$ on $\Omega,$  and $\nu$ a probability measure on $\Omega.$ Moreover, the supremum is {\it uniquely} achieved by the {\it imitative } Boltzmann-Gibbs distribution $\mu_*$ widely used in distributed strategic learning \cite{tembine2012}, 
$$
d\mu_*= \frac{e^{\theta \phi}\ d\nu }{\int_{\Omega}  e^{\theta \phi}\ d\nu}.
$$
The function $\tilde{H}(.|.)$ is the relative entropy from $\Omega $ to $\bar{\mathbb{R}}$ given by
$$
\tilde{H}(\mu | \nu):=\int \log \left( \frac{d\mu }{ d\nu}\right) d\mu=\int \frac{d\mu }{ d\nu} \log\left( \frac{d\mu }{ d\nu}\right)\ d\nu,
$$
whenever $\mu\in \mathcal{P}(\Omega)$ is absolutely continuous with the respect to $\nu,$ otherwise we set $\tilde{H}(\mu | \nu)=+\infty.$ The problem is 
\be
\inf_{u} \bar{J}^{\theta}(u) =\inf_{u} \sup_{\mu\in \mathcal{P}(\Omega)}\left[ \mathbb{E}_{\mu}[\Psi_T]  -\frac{1}{\theta} \tilde{H}(\mu | \ \mathbb{P}) \right],
\ee

\subsection{Existence of solution to the state equation}
We now focus on the well-posedness of the state dynamics.

\begin{proposition}\label{cond1} If the functions $b$ and $ \sigma$ are Lipschitz  with the respect to $(x,y)$ with Lipschitz constant $L>0$ and 
 $$\int_0^T \left(\int_y |b(t, 0,y,u)|^{ \alpha} m(t,dy)\ \right)^{1/ \alpha}dt < +\infty,$$ and then,
 the SDE in (\ref{SDEu}) admits a unique strong solution $x^u$ in $L^{\alpha}.$ If in addition, $$\int_0^T \left(\int_y |b(t, X,y,u)|^{2 \alpha} m(t,dy)\right)\ dt < +\infty,$$ a.s. then $\forall \ \alpha\geq 2,\ $
$$
  \sup_n \ \sqrt{n}\ \{ \mathbb{E}\left(\sup_{t\leq T}\ | x_{i,n}(t)- \bar{x}_{i,n}(t)| ^{ \alpha} \right)\}^{\frac{1}{ \alpha}} < +\infty,
$$
where $x_{i,n}(t)$ a $i-$th particle state solution of 
\begin{equation}\label{SDEuram1}
\left\{\begin{array}{lll}
dx^u_{i,n}(t)=\left(  \frac{1}{n}\sum_{j=1}^n  | b(., t,x^u_{i,n}(t), x^u_{j,n}(t), u(t)) |^{ \alpha}              \right)^{\frac{1}{ \alpha}}dt \\
+ \sigma(., t,x^u_{i,n}(t))dB_{i,n}(t), \\ \ \mathcal{L}(x^u_{i,n}(0)) = m_0,\\
\end{array}\right.
\end{equation} and $ \bar{x}_{i,n}(t)$ has the law of  ${x}^{u}(t).$
\end{proposition}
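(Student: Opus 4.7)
The plan is to treat both assertions by classical McKean--Vlasov coupling technology adapted to the $L^\alpha$-norm drift, with Minkowski's inequality doing most of the work of propagating the Lipschitz bound on $b$ through the $\alpha$-th root. First I would show that $\bar b$ is Lipschitz in $(x,m)$ for the Wasserstein-$\alpha$ distance $W_\alpha$: for any coupling $\pi$ of two laws $m,m'$, the reverse Minkowski inequality in $L^\alpha(\pi)$ yields
\begin{equation*}
|\bar b(t,x,m,u)-\bar b(t,x',m',u)|\leq\left(\int |b(t,x,y,u)-b(t,x',y',u)|^\alpha\,\pi(dy,dy')\right)^{1/\alpha},
\end{equation*}
and a second Minkowski step combined with the Lipschitz hypothesis on $b$ gives $|\bar b(t,x,m,u)-\bar b(t,x',m',u)|\leq L|x-x'|+LW_\alpha(m,m')$. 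Existence and uniqueness of a strong $L^\alpha$-solution then follow from a Picard iteration on the adapted $L^\alpha$-processes endowed with a weighted norm $\|X\|_K^\alpha=\sup_{t\leq T}e^{-Kt}E|X_t|^\alpha$ for $K$ large, the BDG inequality dispatching the Brownian integral and the growth hypothesis on $\bar b(\cdot,0,\cdot,u)$ keeping the iterates in $L^\alpha$.

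For the propagation of chaos I would use a synchronous coupling: let $\bar x_{i,n}$ be independent copies of the McKean--Vlasov solution driven by the same Brownian motions $B_{i,n}$, set $\Delta_i(t):=x_{i,n}(t)-\bar x_{i,n}(t)$, and let $\bar b_n$ denote the $L^\alpha$-average against the empirical measure $\frac{1}{n}\sum_j\delta_{\bar x_{j,n}}$. Decompose the drift difference as $A_1+A_2$ with
\begin{align*}
A_1&=\bar b_n(s,x_{i,n},(x_{j,n})_j)-\bar b_n(s,\bar x_{i,n},(\bar x_{j,n})_j),\\
A_2&=\bar b_n(s,\bar x_{i,n},(\bar x_{j,n})_j)-\bar b(s,\bar x_{i,n},m^u(s),u(s)).
\end{align*}
Minkowski and the Lipschitz bound on $b$ give $|A_1|\leq L|\Delta_i|+L\bigl(\frac{1}{n}\sum_j|\Delta_j|^\alpha\bigr)^{1/\alpha}$, whose $\alpha$-th moment is controlled by $C\,E|\Delta_i|^\alpha$ by exchangeability. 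For $A_2$, conditioning on $\bar x_{i,n}$ renders $Y_j:=|b(s,\bar x_{i,n},\bar x_{j,n},u)|^\alpha$, $j\neq i$, i.i.d.\ with variance controlled by the $L^{2\alpha}$-assumption, so the inner sum concentrates around its conditional mean at rate $n^{-1/2}$ in $L^2$; the subadditivity inequality $|a^{1/\alpha}-b^{1/\alpha}|\leq |a-b|^{1/\alpha}$ for $a,b\geq 0$ then transfers the fluctuation to $|A_2|$, the diagonal term $j=i$ contributing only an $O(1/n)$ bias.

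Taking $\sup_{r\leq t}$, applying BDG to the Lipschitz $\sigma$-integral, and using Jensen on the drift integral produces
\begin{equation*}
E\sup_{r\leq t}|\Delta_i(r)|^\alpha\leq C\int_0^t E\sup_{r\leq s}|\Delta_i(r)|^\alpha\,ds+C\varepsilon_n,
\end{equation*}
with $\varepsilon_n$ decaying at the advertised rate, after which Gronwall closes the loop. The hard part will be the non-smoothness of $x\mapsto x^{1/\alpha}$ at zero: a clean mean-value bound on $A_2$ would require a strictly positive lower bound on the inner $|b|^\alpha$-mean, which is not available, so every estimate on $A_2$ must be routed through the Minkowski/subadditivity inequalities above. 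I also expect the careful bookkeeping of the diagonal $j=i$ term and the exchangeability-based symmetrization in $i$ to be the most delicate technical step in closing the Gronwall loop with the announced $n^{-1/2}$ rate in $L^\alpha$.
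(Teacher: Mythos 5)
Your architecture coincides with the paper's own proof. For existence, the paper also runs a fixed-point/contraction argument in Wasserstein distance, with the Minkowski inequality and the Lipschitz hypothesis on $b$ producing the constant $C_T=2Le^{2LT}$ in the Picard iterates $\mathcal{L}^k(\mu_0)$ exactly as your Lipschitz estimate for $\bar b$ in $(x,W_\alpha)$ would (incidentally, the inequality you need there is the ordinary triangle inequality in $L^\alpha(\pi)$, not a ``reverse Minkowski''). For propagation of chaos the paper likewise uses a synchronous coupling with independent McKean--Vlasov copies and a Gronwall closure; your two-term split $A_1+A_2$ is its three-term split $I_1+I_2+I_3$ with $A_1=I_2+I_3$ (the paper separates the swap of empirical measures, bounded by $\frac{L}{n^{1/\alpha}}\bigl(\sum_j|\Delta_j|^\alpha\bigr)^{1/\alpha}$ via Minkowski since the two empirical measures are carried by paired atoms, from the swap of the first argument) and $A_2=I_1$.

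The one concrete gap is your quantification of $A_2$, and you half-admit it. Routing the fluctuation through the subadditivity bound $|a^{1/\alpha}-b^{1/\alpha}|\le|a-b|^{1/\alpha}$ gives $E|A_2|^\alpha\le E|S_n-ES_n|$, where $S_n$ is the conditional empirical mean of $|b|^\alpha$; the $L^2$ concentration supplied by the $2\alpha$-moment hypothesis then yields $E|A_2|^\alpha=O(n^{-1/2})$, hence $\{E|A_2|^\alpha\}^{1/\alpha}=O(n^{-1/(2\alpha)})$. For $\alpha\ge 2$ this is strictly weaker than the $O(n^{-1/2})$ input needed for Gronwall to return the advertised rate $\sup_n\sqrt{n}\,\{E\sup_t|\Delta_i|^\alpha\}^{1/\alpha}<\infty$. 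You correctly identify the non-differentiability of $t\mapsto t^{1/\alpha}$ at zero as the obstruction, but you do not overcome it: a mean-value bound needs the inner $|b|^\alpha$-average bounded away from zero, and even then an $L^\alpha$-rate of $n^{-1/2}$ for the centered sum requires a Marcinkiewicz--Zygmund or Rosenthal estimate, which asks for moments of $|b|$ of order $\alpha^2$ rather than the stated $2\alpha$. In fairness, the paper's own proof does not close this either; it declares the $I_1$ term ``classical'' and asserts $\sup_n\sqrt{n}\,\{E\sup_s\|Y_{n,s}\|^\alpha\}^{1/\alpha}<+\infty$ without argument. So your proposal reproduces the paper's strategy faithfully, including its weakest link; to make either version airtight one must add a nondegeneracy or higher-moment hypothesis on $b$ (or settle for the rate $n^{-1/(2\alpha)}$).
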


\begin{proof}
See Appendix.
\end{proof}
As we provide in Theorem \ref{theoremsavage} in Appendix, the mean-field convergence of the empirical measure  $\frac{1}{n}\sum_{i=1}^n \delta_{x^u_{i,n}(t)} $ is by now  a well-established result under 
de Finetti-Hewitt-Savage theorem. The issue here is to identify the limiting measure  $m$ with the particularity of  the $L^{\alpha}-$norm structure.  We provide an example of mean-field-type SDE in cooperative dynamics.
\begin{example}[Effect of mean-field in cooperative dynamics]
Consider the   mean-field  stochastic dynamics 
with drift $$\left\{\int \|\sin(-x^3(t)+x(t))-\mu\sin(x(t)-y)\|^{ \alpha} m(t,dy)\right\}^{1/ \alpha},$$ 
where $\mu>0,$ and  constant diffusion coefficient $\sigma\in \mathbb{R}.$ The first term in the drift ($\sin(-x^3(t)+x(t))$) is often replaced by a control action $u(t)\in [-1,1]$ to get
\begin{eqnarray}
dx(t)&=&\sigma dB(t)+\nonumber \\ &&
\left\{\int \| u(t)-\mu \sin(x(t)-y)\|^{ \alpha} m(t,dy)\right\}^{1/ \alpha}dt
\end{eqnarray}
This type of mean-field SDE models has been used to understand muscle contraction (see Section 5 in \cite{bio1}). Other similar models have been widely studied in chemical kinetics, statistical mechanics and economics to capture cooperative behavior of a generic particle, oscillator or an agent. 
\end{example}

 Note that the presence of the measures   $m^{u}(s), \ 0\le s\le T$, in the loss function  $J^{\theta}$  may cause  time-inconsistency, in which case the Bellman's Principle  with the state $x$ is no longer valid and this motivates the use of the stochastic maximum principle (SMP) approach to a get a finite dimensional framework. Note, however that, one can apply DPP where  the state in infinite dimension $\mu(t,.)$ as shown in Section \ref{dppgame} (see  also \cite{dppnew}). 

\subsection{Stochastic Maximum Principle}
We define the risk-neutral Hamiltonian associated with random variables $X\in L^{ \alpha}(\Omega, {\mathcal{F}},\dbP)$ as follows.  for $(p, q)\in \dbR\times \dbR$
\begin{eqnarray}\label{hamiltonian}
H(t,X,m,u,p,q):=\\ \bar{b}(t,X,m,u)p+\sigma(t,X)(t,X)q-f(t,X,m,u).\nonumber
\end{eqnarray}

We also introduce the risk-sensitive Hamiltonian: for $\theta \in \dbR$ and $(p,q, \ell)\in \dbR\times \dbR\times \dbR$,
\begin{equation}\label{rs-hamiltonian}
H^{\theta}(t, X,m, u, p, q,\ell):= \bar{b} p+\sigma(q +\theta\ell p)-f.
\end{equation}
We have $H=H^0.$ The sign $(-f)$ is used here for the only purpose of having maximum principle instead of minimum principle and does not fundamentally change the methodology. 
Moreover, we  denote
\be\label{dH}\begin{array}{lll}
 H_k^{\theta}(t):=p(t)\bar{b}_k(t)+(q +\theta\ell p) \sigma_k(t)- f_k(t),
\end{array}
\ee
for $k\in \{x,m\}.$

Note that even if $b$ is differentiable, the drift coefficient $\bar{b}$ which is $$\ \left(\int_{y\in \mathcal{X}}  b^{ \alpha}(., t,x^u(t), y, u(t)) m^u(t, dy)\right)^{\frac{1}{ \alpha}},$$ may not be  differentiable at the points where $b(.)=0.$
Denote by 
$$\bar{b}_x(t)=
\frac{\int_{y\in \mathcal{X}}  b_x b^{ \alpha-1}(., t,x^u(t), y, u(t)) m^u(t, dy) } {  \bar{b}^{ \alpha-1}}
$$ if $\bar{b}^{ \alpha-1}(x,m)> 0.$ The case where $\bar{b}^{ \alpha-1}(x,m)<0$ is handled in a similar way.
The differentiation with the  respect to the measure $m$ is considered in a Gateaux-derivative sense as in \cite{alainbook}.
$$
\lim_{\epsilon \rightarrow 0_+}\frac{d}{d\epsilon} \bar{b}(.,t, x, m+\epsilon d)=\int   \bar{b}_m(.,t,x,m)(\xi) \ d(d\xi).
$$

\begin{example} We provide Gateaux differentiation of $\| x\|_{\alpha}-$based functions:
\begin{itemize}\item Mean state: 
Let $f(., t, x,m)=\int y m(t, dy).$  Then,
$$
\lim_{\epsilon \rightarrow 0_+}\frac{d}{d\epsilon} f(.,t, x, m+\epsilon d)=\int   \xi  \ d(d\xi).
$$
The Gateaux-derivative with the respect to $m$ is
$
f_m(., t, x,m)(\xi)=\xi.
$
Then $
f_m(., t, \xi,m)(x)=x,\   \partial_{x}[f_m(., t, \xi,m)(x)]=1.
$
It is therefore clear that  $\partial_{x}[f_m(., t, \xi,m)(x)]=1\neq 0=\partial_{m}[f_x].$
\item Square of the mean:

Let $g(., t, x,m)=\frac{1}{2}(\int y m(t, dy))^2.$  Then,
$$
\lim_{\epsilon \rightarrow 0_+}\frac{d}{d\epsilon} g(.,t, x, m+\epsilon d)=\bar{m}\int   \xi  \ d(d\xi).
$$
$g_m(., t, x,m)(\xi)=\xi \bar{m}. $ Hence,
$g_m(., t, \xi,m)(x)=x \bar{m}, $ and
$\partial_x g_m(., t, \tilde{X},m)(x)= \bar{m}. $
\item Second moment: 
If  $g(., t, x,m)=\frac{1}{2}\int y^2 m(t, dy)$  then,
$$
\lim_{\epsilon \rightarrow 0_+}\frac{d}{d\epsilon} g(.,t, x, m+\epsilon d)=\frac{1}{2}\int   \xi^2  \ d(d\xi).
$$
$g_m(., t, x,m)(\xi)=\frac{1}{2}\xi^2 . $ Hence,
$g_m(., t, \xi,m)(x)=\frac{1}{2}x^2. $
$\partial_x g_m(., t, \tilde{X},m)(x)= x. $
\item $\alpha-$th moment:  
$g(., t, x,m)=\int y^{\alpha} m(t, dy)$  then,
$
\lim_{\epsilon \rightarrow 0_+}\frac{d}{d\epsilon} g(.,t, x, m+\epsilon d)=\int   \xi^{ \alpha} \ d(d\xi).
$
Thus, $g_m(., t, \xi,m)(x)= x^{ \alpha},$ and
$\partial_x g_m(., t, \tilde{X},m)(x)=  \alpha x^{ \alpha-1}. $

\item   $\alpha-$norm: $g(., t, x,m)=(\int |y|^{ \alpha} m(t, dy))^{1/  \alpha}=m_{\alpha}^{1/  \alpha}$  then,
$$
\lim_{\epsilon \rightarrow 0_+}\frac{d}{d\epsilon} g(.,t, x, m+\epsilon d)$$ $$=\frac{1}{ \alpha} \left[\int   |\xi|^{ \alpha}  \ m(d\xi)\right]^{\frac{1}{ \alpha}-1} \left[\int   |\xi|^{ \alpha}  \ d(d\xi)\right].
$$
Thus, $g_m(., t, \xi,m)(x)= \frac{x^{ \alpha}}{ \alpha m_{ \alpha}^{ \alpha-1}},$ and
$\partial_x g_m(., t, \tilde{X},m)(x)= \frac{x^{ \alpha-1}}{m_{ \alpha}^{ \alpha-1}}. $
\item $L^{ \alpha}-$normed drift:
We compute the Gateaux-derivative of the $L^{ \alpha}-$normed drifts: $ \bar{b}_m(.,t,x,m)(\xi):= \frac{b^{ \alpha}(.,t,x,\xi)}{ \alpha \bar{b}^{ \alpha-1}}.$ By changing variables, one has
$ \bar{b}_m(.,t,\xi,m)(x):= \frac{b^{ \alpha}(.,t,\xi,x)}{ \alpha \bar{b}^{ \alpha-1}(t,\xi,m)}.$
We differentiate with the respect to $x$ to get:
\begin{eqnarray} \partial_x \bar{b}_m(.,t,\xi,m)(x)
 &=& \frac{b^{ \alpha-1}(.,t,\xi,x) b_y(.,t,\xi,x)}{\bar{b}^{ \alpha-1}(t,\xi,m)}.
 \end{eqnarray}

$E[L \partial_x \bar{b}_m(.,t,X,m)(x)]= E[L \frac{b^{ \alpha-1}(.,t,X,x) b_y(.,t,X,x)}{\bar{b}^{ \alpha-1}(t,X,m)}]:=\tilde{E}[\tilde{L} \frac{b^{ \alpha-1}(.,t,\tilde{X},x) b_y(.,t,\tilde{X},x)}{\bar{b}^{ \alpha-1}(t,\tilde{X},m)}],$ where the notation $\tilde{E}$ denotes the expectation with the respect to the variables with $\tilde{X}$ which is an  copy of $X.$
We now replace the argument $x$ by $X$ to get
$\tilde{E}[\tilde{L} \partial_x \bar{b}_m(.,t,\tilde{X},m)(X)]=\tilde{E}[\tilde{L} \frac{b^{ \alpha-1}(.,t,\tilde{X},X) b_y(.,t,\tilde{X},X)}{\bar{b}^{ \alpha-1}(t,\tilde{X},m)}].$
If $ \alpha=1,$ one  gets  $\tilde{E}[\tilde{L} b_y(.,t,\tilde{X},X)].$
\end{itemize}
\end{example}

We now  introduce the first  order adjoint processes involved in the risk-sensitive SMP.
The (risk-sensitive) first order adjoint equation is the following  backward SDE of mean-field type:
\be\label{rs-firstAD-1}\left\{\begin{array}{lll}
d {p}(t)= -\left\{H^{\theta}_x(t)+\frac{1}{v^{\theta}(t)}E[v^{\theta}(t) \partial_xH^{\theta}_{m}(t)]\right\} dt \\ + {q}(t)(-\theta\ell(t)dt+ d B_t),
\\
dv^{\theta}(t)=\theta\ell(t)v^{\theta}(t)dB_t,
\\
v^{\theta}(T)= \phi^{\theta}(T),\\
{p}(T)= -h_x(T)-\frac{1}{\phi^{\theta}(T)}E[\phi^{\theta}(T) \partial_xh_{m}(T)].
\end{array}
\right.
\ee
where,
\be\label{phi}
\phi^{\theta}(T):=e^{\theta [h(\bar x(T), \bar m(T))+ \int_0^Tf(t,\bar x(t), \bar m(t), \bar u(t)) dt]}. 
\ee

{Note that the  Hamiltonian terms in (\ref{rs-firstAD-1}) are evaluated at the optimal state and optimal control $(\bar x(\cdot),\bar u(\cdot)),$ i.e., 
$H^{\theta}_k(t, \bar{x}(t),\bar{m}(t), \bar{u}(t), \bar{p}(t), \bar{q}(t),\ell(t)), \  \ k\in \{ x,m\}.$
}

\begin{lemma}[\cite{Buckdahn1,b3}] \label{lemt1}
Consider the following mean-field backward SDE
$$
p(t)=p(T)+\int_t^T \tilde{E}[\hat{f}(s, \tilde{p}(s), \tilde{q}(s), p(s), q(s))]\ ds $$ $$ -\int_t^T q(s) dB(s),\ 
$$
where $p(T)$ is a progressively measurable, square integrable random variable. 
Let $\hat{f}(t, .,.,.,.)$ be Lipschitz for all time $t\in [0,T]$ and $t \mapsto f(t, 0,0,0,0)$ be square integrable over $[0,T].$
Then, the mean-field backward SDE has a unique adapted solution satisfying
\begin{equation}\label{rs-second-boundst2}
E\left[\sup_{t\in[0,T]}| p(t)|^2+\int_0^T | q(t)|^2 dt\right]<\infty.
\end{equation} 
Note that, by choosing $\hat{f}(t, \tilde{p}(t), \tilde{q}(t), p(t), q(t))=a_0(t,.)+a_1(t,.) \tilde{p}(t)+a_2(t,.) \tilde{q}(t)+a_3(t,.) p +a_4(t,.)q(t)$ where $a_i(t,.)$ are measurable bounded coefficient functions, one gets a backward equation in the form of  the adjoint equations.  
\end{lemma}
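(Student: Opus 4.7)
The plan is to prove existence and uniqueness by a Picard/Banach fixed-point argument on an appropriate weighted $L^2$-space of progressively measurable processes, reducing the mean-field BSDE to a family of classical BSDEs parametrized by a ``frozen'' copy of the solution. Let $\mathbb{H}^2_\beta$ denote the space of pairs $(p,q)$ of $\dbF$-adapted processes equipped with the norm
\[
\|(p,q)\|^2_{\beta} := E\Bigl[\sup_{t\in[0,T]} e^{\beta t}|p(t)|^2 + \int_0^T e^{\beta s}|q(s)|^2\, ds\Bigr],
\]
for a parameter $\beta>0$ to be chosen large. This is a Banach space equivalent to the canonical $\mathcal{S}^2\times\mathcal{H}^2$ structure whose norm appears in (\ref{rs-second-boundst2}).

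The first step is to define a mapping $\Phi:\mathbb{H}^2_\beta\to\mathbb{H}^2_\beta$ as follows. Given $(P,Q)\in\mathbb{H}^2_\beta$, introduce the driver
\[
F(s,x,y) := \tilde{E}\bigl[\hat{f}(s,\tilde{P}(s),\tilde{Q}(s),x,y)\bigr],
\]
where $(\tilde P,\tilde Q)$ is an independent copy of $(P,Q)$ on a product probability space and $\tilde E$ integrates only the tilde variables. Using the Lipschitz property of $\hat{f}$ and Jensen's inequality, $F$ is Lipschitz in $(x,y)$ with the same constant, and the square-integrability of $s\mapsto \hat{f}(s,0,0,0,0)$ together with $\|(P,Q)\|_\beta<\infty$ implies $E\int_0^T|F(s,0,0)|^2\, ds<\infty$. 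Thus the classical Pardoux--Peng theorem yields a unique adapted solution $(p,q)$ of the \emph{standard} BSDE
\[
p(t)=p(T)+\int_t^T F(s,p(s),q(s))\, ds - \int_t^T q(s)\, dB(s),
\]
and we set $\Phi(P,Q):=(p,q)$.

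The second step is to show $\Phi$ is a strict contraction on $\mathbb{H}^2_\beta$ for $\beta$ large. Take $(P_1,Q_1), (P_2,Q_2)$, write $(p_i,q_i)=\Phi(P_i,Q_i)$, and set $\delta p=p_1-p_2$, $\delta q=q_1-q_2$, $\delta P=P_1-P_2$, $\delta Q=Q_1-Q_2$. Applying It\^o's formula to $e^{\beta t}|\delta p(t)|^2$, taking expectation, and using the Lipschitz bound
\[
\bigl|F_1(s,x,y)-F_2(s,x,y)\bigr|\le L\bigl(\tilde E|\delta P(s)|^2+\tilde E|\delta Q(s)|^2\bigr)^{1/2},
\]
together with Young's inequality to absorb the $|\delta p|^2$ and $|\delta q|^2$ terms on the right, yields
\[
\|\Phi(P_1,Q_1)-\Phi(P_2,Q_2)\|^2_\beta \le \frac{C(L)}{\beta}\,\|(P_1,Q_1)-(P_2,Q_2)\|^2_\beta.
\]
Choosing $\beta>C(L)$ gives contraction; the Burkholder--Davis--Gundy inequality promotes the $L^2$-control of $p$ to a supremum estimate recovering the $\mathcal{S}^2$-part of the norm. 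Banach's fixed-point theorem then delivers a unique $(p,q)\in\mathbb{H}^2_\beta$ with $\Phi(p,q)=(p,q)$, i.e.\ a unique solution of the mean-field BSDE, automatically satisfying (\ref{rs-second-boundst2}).

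The main technical obstacle is handling the mean-field coupling cleanly: the driver depends on $\tilde E[\hat{f}(\cdot,\tilde p,\tilde q,p,q)]$ rather than on $(p,q)$ alone, so one cannot invoke classical BSDE well-posedness directly. The device of freezing $(\tilde P,\tilde Q)$ on a product space converts the mean-field BSDE into a classical one at the price of iterating; the Lipschitz constant of the frozen driver in the ``tilde'' arguments is what forces the $\beta$-weighted norm estimate and dictates how large $\beta$ must be taken. All remaining ingredients (Lipschitz preservation under $\tilde E$, BDG, Young's inequality, Gronwall) are standard, and uniqueness follows either from the contraction or, alternatively, by applying the same It\^o computation directly to the difference of two candidate solutions.
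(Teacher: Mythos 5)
The paper does not prove this lemma itself: it is stated as a quoted result, attributed to Buckdahn--Li--Peng and Buckdahn--Djehiche--Li, with no proof supplied. Your fixed-point argument --- freezing an independent copy $(\tilde P,\tilde Q)$ on a product space to reduce to a classical Pardoux--Peng BSDE, then contracting in a $\beta$-weighted norm and upgrading to the $\mathcal{S}^2\times\mathcal{H}^2$ estimate via Burkholder--Davis--Gundy --- is correct and is essentially the standard argument given in those cited references, so it fills in exactly the proof the paper delegates.
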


\begin{proposition}\label{EU-rs-first}
If the functions $b, \sigma, f, h,$ are twice continuously differentiable with respect to $(x,m)$ and 
 $b, \sigma, f,h$ and all their first order derivatives with respect to $(x,m)$ are continuous in $(x,m, u)$, and bounded then
(\ref{rs-firstAD-1}) admits an $\dbF$-adapted solution $(\bar p,\bar q,v^{\theta},\ell)$ such that
\begin{eqnarray} \nonumber
E\left[\sup_{t\in[0,T]}|\bar p(t)|^2+\sup_{t\in[0,T]}|v^{\theta}(t)|^2\right.\\ \label{rs-first-bounds}
\left. +\int_0^T \left(|\bar q(t)|^2+|\ell(t)|^2\right) dt\right]<\infty. 
\end{eqnarray} 

In addition, if $b>0$ then (\ref{rs-firstAD-1}) admits a unique  $\dbF$-adapted solution. 
\end{proposition}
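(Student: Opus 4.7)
The plan is to decouple system (\ref{rs-firstAD-1}): first solve the BSDE for $(v^{\theta},\ell)$, which is self-contained in those two unknowns, and then interpret the $(\bar p,\bar q)$ equation as a linear mean-field BSDE with bounded random coefficients to which Lemma \ref{lemt1} applies.

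\emph{Step 1: the pair $(v^{\theta},\ell)$.} Under the stated boundedness of $f$ and $h$, the random variable $\phi^{\theta}(T)=e^{\theta\Psi_{T}}$ is bounded strictly above $0$ and below $\infty$, hence square integrable. Consequently the martingale $v^{\theta}(t):=E[\phi^{\theta}(T)\mid\mathcal{F}_{t}]$ is uniformly bounded above and below, and the martingale representation theorem furnishes a unique predictable $Z\in L^{2}([0,T]\times\Omega)$ with $dv^{\theta}(t)=Z(t)\,dB(t)$. Setting $\ell(t):=Z(t)/(\theta v^{\theta}(t))$, which is legitimate because $v^{\theta}>0$, yields the pair $(v^{\theta},\ell)$ with the required $L^{2}$ bounds and the correct terminal value.

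\emph{Step 2: the pair $(\bar p,\bar q)$.} Since $\sigma$ is $m$-free, $H^{\theta}_{m}=\bar b_{m}p-f_{m}$ and $\partial_{x}H^{\theta}_{m}=(\partial_{x}\bar b_{m})\,p-\partial_{x}f_{m}$. Performing the Fubini/independent-copy exchange illustrated in the $\alpha$-norm example rewrites the drift of the $p$-equation as
\[
-[\bar b_{x}+\theta\sigma_{x}\ell]\,p-[\sigma_{x}+\theta\ell]\,q+f_{x}-\frac{1}{v^{\theta}}\tilde E\!\left[\tilde v^{\theta}\bigl((\partial_{x}\bar b_{m})\,\tilde p-\partial_{x}f_{m}\bigr)\right],
\]
where tilded objects denote independent copies. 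The boundedness of $\bar b_{x},\sigma_{x},f_{x},\partial_{x}\bar b_{m},\partial_{x}f_{m}$ together with the uniform bounds on $v^{\theta}$ and $1/v^{\theta}$ from Step 1 give bounded Lipschitz coefficients in $(p,q,\tilde p,\tilde q)$, putting the equation in the form handled by Lemma \ref{lemt1}. A Girsanov change of measure with density $v^{\theta}(t)/v^{\theta}(0)$ can be inserted, if one prefers, to absorb the $-\theta\ell q$ term and reduce to the additive form of the lemma. The terminal datum $\bar p(T)$ is square integrable by boundedness of $h_{x}$, $h_{m}$, $\phi^{\theta}(T)$ and $1/\phi^{\theta}(T)$, so the lemma supplies an $\dbF$-adapted pair $(\bar p,\bar q)$ satisfying (\ref{rs-first-bounds}).

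\emph{Step 3: uniqueness when $b>0$, and the main obstacle.} The Gateaux derivative
\[
\partial_{x}\bar b_{m}(\cdot,t,\xi,m)(x)=\frac{b^{\alpha-1}(\cdot,t,\xi,x)\,b_{y}(\cdot,t,\xi,x)}{\bar b^{\alpha-1}(t,\xi,m)}
\]
carries $\bar b^{\alpha-1}$ in the denominator, which can vanish on $\{b\equiv 0\}$ and is the sole obstruction to a global Lipschitz bound on the driver. Under the positivity assumption $b>0$, this denominator is bounded below, the coefficients become globally Lipschitz in $(p,\tilde p,q,\tilde q)$, and the standard a priori estimate for linear mean-field BSDEs forces any two adapted solutions to coincide. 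Controlling this singular denominator is the principal technical hurdle; for bare existence without $b>0$ one can still regularize $b\mapsto b+\varepsilon$, apply the Lipschitz case, and extract a weak limit using the uniform estimates of Lemma \ref{lemt1}, but uniqueness genuinely requires the strict positivity.
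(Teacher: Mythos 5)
Your proposal is correct in substance and follows the same backbone as the paper's proof: view (\ref{rs-firstAD-1}) as a linear mean-field BSDE for $(\bar p,\bar q)$ with random coefficients built from $(x,v^{\theta},\ell)$, check that these coefficients do not blow up, invoke Lemma \ref{lemt1}, and isolate the vanishing denominator $\bar b^{\alpha-1}$ at the zeros of $b$ as the only obstruction, with $b>0$ restoring a genuine Lipschitz driver and hence uniqueness. Where you genuinely diverge is in two sub-steps. First, you construct $(v^{\theta},\ell)$ explicitly as a bounded martingale plus martingale representation and then propose the Girsanov tilt by $v^{\theta}(t)/v^{\theta}(0)$ to absorb the $-\theta\ell q$ term; the paper only asserts that ``the process $v^{\theta}(t)$ is almost surely bounded'' and treats all coefficients as bounded. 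Your route is actually the more careful one here, because $\ell$ is in general only BMO, not bounded, so the coefficient $\sigma_x+\theta\ell$ of $q$ is not covered by the literal hypotheses of Lemma \ref{lemt1}; the Girsanov step (or a BMO argument) is not optional but needed to justify the application of the lemma. Second, at the zeros of $b$ the paper selects an arbitrary element of the sub-differential, bounds every selection by $M=\sup|b_x|$, and applies Lemma \ref{lemt1} selection-by-selection, whereas you regularize $b\mapsto b+\varepsilon$ and pass to a weak limit. The two devices buy roughly the same thing at the same level of rigor.

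One caveat on your regularization route: the uniform estimates give weak compactness of $(p^{\varepsilon},q^{\varepsilon})$, but to conclude you must identify the equation satisfied by the limit, and the weak limits of $\bar b^{\varepsilon}_x$ and $\partial_x\bar b^{\varepsilon}_m$ are only elements of the sub-differential set at the points where $b$ vanishes. So the limit solves (\ref{rs-firstAD-1}) only in the sub-differential sense, which is exactly the interpretation the paper adopts from the outset; your argument does not escape it, it rederives it. This is a presentational gap rather than a mathematical error, since the statement of the proposition is itself only meaningful with that interpretation, but you should say explicitly in what sense the limiting pair solves the adjoint equation.
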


 \begin{proof}
 Under the assumptions of  Proposition \ref{EU-rs-first}, the processes $p,q$ solve a backward SDE coupled with the process $v^{\theta}$. Moreover, these equations can be transformed into linear SDEs of mean-field-type, which involves $(p,q, E[p], E[q])$ and random coefficients involving $(x, L^{\theta}:=\frac{v^{\theta}}{E[\phi^{\theta}(T)]},l).$ We now check that the coefficients of the linear SDEs does not blow-up within the horizon $[0,T].$ For the functions  $\sigma, f, h,$  and their derivatives the boundedness follow from the assumption. However, it is not immediate for the drift coefficient $\bar{b}$ . We recall that $\bar{b}_x$, $\bar{b}_m$, $\bar{b}_{xm}$ are not clearly defined 
 at the point where $b(.,t,x,.)$ is  zero. Since $\alpha\geq 1,$ we replace these terms by any representation in the sub-differential set. All terms are bounded by $M=\sup_{t\in [0,T]}\sup  |b_x(t,.)|.$ The process $v^{\theta}(t)$ is almost surely bounded. Then,  for each direction chosen in the sub-differential, the assumptions of Lemma  \ref{lemt1} are fulfilled and hence,  the existence  of solution to the first order risk-sensitive adjoint equations follows. Moreover, if $b>0$ then the denominator does not vanish and  $\bar{b}_x$ and $\partial_x\bar{b}_{m}$ are (uniquely) well-defined, and bounded by $M.$ Using Lemma   \ref{lemt1} again we get existence and uniqueness of solution.
 \end{proof} 
Note that the boundedness and differentiation conditions can be weaken by using the techniques developed in \cite{Mezerdi,jourdain}.
 The following Proposition is the stochastic maximum principle for  Problem (\ref{SDEu}).

\begin{proposition}\label{main result}$(${\bf Risk-sensitive maximum principle}$)$
Let  the Assumptions of Proposition \ref{EU-rs-first}  hold. If  $(\bar x(\cdot),\bar  u(\cdot))$  is an optimal solution of the risk-sensitive control problem (\ref{SDEu})-(\ref{rs-opt-u}), then there are two pairs of $\dbF$-adapted processes $(v^{\theta}, \ell)$, $(\bar p,\bar{q})$  that satisfy (\ref{rs-firstAD-1})-(\ref{rs-first-bounds})  respectively, such that

$$
H^{\theta}(t, \bar x(t),\bar m(t),\bar u(t), \bar{p}(t), \bar{q}(t),\ell(t))$$ $$=\max_{u} H^{\theta}(t,\bar x(t),  \bar m(t), u, \bar{p}(t), \bar{q}(t),\ell(t)),
$$ for almost every $t\in [0,T]$ and $\mathbb{P}-$almost surely.
\end{proposition}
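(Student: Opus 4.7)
The plan is to combine a needle (spike) variation argument with an auxiliary exponential process that linearises the risk-sensitive cost, and then to handle the non-smoothness of the $L^{\alpha}$-norm drift by a measurable selection from the (Clarke) subdifferential.

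First, I would introduce the auxiliary process $\phi^{\theta}(t):=\exp\bigl\{\theta\int_0^t f(s,x^u(s),m^u(s),u(s))\,ds\bigr\}$, which satisfies the linear SDE $d\phi^{\theta}=\theta f\,\phi^{\theta}\,dt$, and rewrite $J^{\theta}(u)=E[\phi^{\theta}(T)\exp(\theta h(x^u(T),m^u(T)))]$. Together with $x^u$ this yields a risk-neutral Mayer-type mean-field-type control problem over the enlarged state $(x^u,\phi^{\theta})$. The process $v^{\theta}$ in (\ref{rs-firstAD-1}) will be the martingale associated with the conditional expectation of the terminal factor $\phi^{\theta}(T)e^{\theta h}$, and $\ell$ will be the corresponding martingale-representation integrand, so that $dv^{\theta}=\theta\ell v^{\theta}\,dB$ with $v^{\theta}(T)=\phi^{\theta}(T)$.

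Second, fix an optimal pair $(\bar x,\bar u)$ and an admissible $u\in\mathcal{U}$. For $\tau\in[0,T)$ and $\epsilon>0$ small, consider the needle variation $u^{\epsilon}=u\,\mathbf{1}_{[\tau,\tau+\epsilon]}+\bar u\,\mathbf{1}_{[0,T]\setminus[\tau,\tau+\epsilon]}$. Expanding $x^{u^{\epsilon}}$ around $\bar x$ to first order in $\epsilon$ produces a linear mean-field variational SDE for the correction $y(\cdot)$, whose coefficients involve $\bar b_x$, $\bar b_m$, $\sigma_x$ and the spike $\bar b(\tau,\bar x,\bar m,u)-\bar b(\tau,\bar x,\bar m,\bar u)$. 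Using the Lipschitz bounds from Proposition \ref{cond1} one obtains $E\sup_{t\le T}|y(t)|^2=O(\epsilon)$ and an analogous estimate for the first-order variation of $\phi^{\theta}$.

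Third, optimality of $\bar u$ gives $J^{\theta}(u^{\epsilon})-J^{\theta}(\bar u)\ge 0$. Dividing by $\epsilon$ and passing to the limit yields a variational inequality involving $y(T)$ and the variation of $\phi^{\theta}(T)$, weighted by $h_x(T)$ and the Gateaux derivative $h_m(T)$. Applying It\^o's formula to $\bar p(t)\,y(t)$ together with the adjoint dynamics (\ref{rs-firstAD-1}), the cross term $\frac{1}{v^{\theta}}E[v^{\theta}\partial_x H^{\theta}_m]$ is exactly what is needed to absorb the mean-field contributions $\tilde E[\tilde v^{\theta}(t)\partial_x \bar b_m(t,\tilde X(t),\bar m(t))(X(t))]$ that arise when differentiating functionals of the law (as in the computations of the Gateaux-derivative example above). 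All terms linear in $y$ then cancel, leaving
\begin{equation*}
H^{\theta}(\tau,\bar x(\tau),\bar m(\tau),u,\bar p(\tau),\bar q(\tau),\ell(\tau))\le H^{\theta}(\tau,\bar x(\tau),\bar m(\tau),\bar u(\tau),\bar p(\tau),\bar q(\tau),\ell(\tau)),
\end{equation*}
and Lebesgue differentiation together with the arbitrariness of $\tau$ and $u\in U$ yields the pointwise maximum principle $\mathbb P$-a.s.

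The main obstacle is the non-differentiability of $\bar b=(\int|b|^{\alpha}\,m(dy))^{1/\alpha}$ on the null set $\{b(\cdot,t,x,\cdot)\equiv 0\}$, and the analogous issue for $\bar b_m$. I plan to resolve this by making a measurable selection from the Clarke subdifferential: any such selection is pointwise dominated by $M=\sup_{t,x,y,u}|b_x(t,x,y,u)|$, so the linear variational and adjoint equations still satisfy the hypotheses of Lemma \ref{lemt1} and admit $\mathbb F$-adapted solutions obeying (\ref{rs-first-bounds}). Under the additional hypothesis $b>0$ of Proposition \ref{EU-rs-first}, the selection is unique and classical differentiation applies; otherwise the maximum principle holds for every admissible selection, which is the best one can expect in the non-smooth regime.
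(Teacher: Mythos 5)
Your proposal is correct and follows essentially the same route the paper takes: the paper's own proof is a one-line appeal to the logarithmic transformation and the argument of \cite{Boualem2014}, and your auxiliary exponential process $\phi^{\theta}$, the martingale $v^{\theta}$ with integrand $\ell$, the spike variation with first-order adjoint (legitimate here since $\sigma$ is control-independent), and the sub-differential selection for the non-smooth $L^{\alpha}$-drift are precisely the ingredients of that argument. The only caveat is your closing claim that the maximum principle holds for \emph{every} admissible selection from the Clarke subdifferential, which is stronger than what the duality argument delivers (and than what the proposition asserts, namely existence of one adjoint pair); but this does not affect the validity of the proof of the stated result.
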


 \begin{proof}
 To prove the SMP, we use  a logarithmic transformation and follows similar steps as in \cite{Boualem2014}.
 \end{proof}
Below we provide an explicit representation of the process of the SMP via a dual approach and partial differential equations of mean-field type.
\section{ Mean-Field-Type Games: two or more risk-sensitive players} \label{sec:game}
We now consider two or more risk-sensitive players. The risk-sensitivity index of player $i$ is $\theta_i.$ The best response to $u_{-i}, m$ is the following  problem :
\begin{equation}\label{SDEugame}
\left\{\begin{array}{lll}
\bar{J}^{\theta}_i(u_i(\cdot), u_{-i}(\cdot))=\\ \frac{1}{\theta_i}\log\left( Ee^{\theta_i\left[\int_0^Tf_i(t,x^u(t),m^u(t), u(t))\,dt+ h_i(x^u(T),m^u(T))\right]}\right),\\
\inf_{u_i} \bar{J}^{\theta}_i(u_i, u_{-i})\ \\
\mbox{subject to}\\
dx^u(t)=\bar{b}(t, x^u(t),m^u(t),u(t))  \ dt+ \sigma( t,x^u(t))dB(t), \\ x^u(0)=x_0,\\
m^u(t):=\mathcal{L}(x^u(t)),
\end{array}\right.
\end{equation}
where $u_{-i}$ denotes $(u_1,\ldots, u_{i}, u_{i+1},\ldots, u_n), \ n\geq 2.$ and by abuse of notation, $u=(u_i,u_{-i}).$
Note that we cannot  impose indistinguishability of the players since $\theta_i$ and the objectives $f_i, h_i$ may be different across the players.

\subsection{Main Result}
We now present the key  results of the paper.
The risk-sensitive game with cost ${J}^{\theta}_i$ solves a system of 
 risk-sensitive HJB  equations
\begin{eqnarray}  \nonumber 0&=&V_{i,t}(\mu)+\int \inf_{u}\left[ \bar{b} \partial_xV_{i,\mu}(\mu)(t,x,z)+f\partial_zV_{i,\mu}(\mu)(t,x,z)       \right. \\ \label{hjbrst2} &&
\left.   +\frac{\sigma^2}{2} \partial_{xx}V_{i,\mu}(\mu)(t,x,z) \right] \mu(t,dx,dz) , \end{eqnarray}
which is a partial differential equation with state $\mu$ (in infinite dimension).
If  we denote $v^*_i(t,x,z):=V_{i,\mu}(t,\mu)(t,x,z)$ as a dual function (because of the Gateaux derivative with the respect  to $\mu$) then  $(v^*_1, \ldots, v^*_n)$ is in finite dimension and solves  the dual system
\begin{eqnarray}  \label{adjointeq1rst2}
0&=&v^*_{i,t}(t,x,z)+v^*_{i,z}(t,x,z) {H}^*_i(t, x, \frac{v^*_{i,x}(t,x,z)}{v^*_{i,z}(t,x,z)}, m)\\  \nonumber &&+\frac{1}{2}\sigma^2 v^*_{i,xx}(t,x,z)+\\
&& \int_{\tilde{w}} v^*_{i,z}(t,\tilde{w})H^*_{i,m}(t, \tilde{x}, \frac{v^*_{i,x}(t,\tilde{w})}{v^*_{i,z}(t,\tilde{w})}, m) (x)\ \mu(t,d\tilde{w}) \nonumber \\
&& i\in \{ 1,2,\ldots,n \} \nonumber
\end{eqnarray}
where $H_i$ is the Hamiltonian, $u_i\in \arg\min H_i,$  $\tilde{w}=(\tilde{x},\tilde{z}),$
and $m(t,x)=\int \mu(t,x,dz_1\ldots dz_n),$ and $\mu(t,.)$ solves the Kolmogorov equation in which $u$ is replaced by the optimal strategies $(u_1,u_2,\ldots, u_n).$
Then $(p^*_i, q^*_i, \eta^*_i, l_i) $ solves the  (risk-sensitive) stochastic maximum principle  system given by:

\be\label{rs-firstAD-1rsmaint2}\left\{\begin{array}{lll}
dp^*_i=-\left[ H^*_{i,x}  +\frac{1}{\eta^*_i} E [ \eta_i^* \partial_xH^*_{i,m}] +  \sigma_x(\frac{\sigma v^*_{i,xx}}{\eta^*_i}) \right. \\
\left.- p^*_i( l^*_i)^2+\frac{\sigma l^*_i v^*_{i,xx}}{\eta^*_i}\right] + (- p^*_i l^*_i+\frac{\sigma v^*_{i,xx}}{\eta^*_i}) dB_i
\\
 d\eta^*_i= \eta^*_i l^*_i dB_i,\\
  \eta^*_i(T)=\theta_i e^{\theta_i[z_i(T)+h_i(x(T), m(T))]},\\
  q_i=(- p^*_i l^*_i+\frac{\sigma v^*_{i,xx}}{\eta^*_i})
\\
p^*_i(T)= h_{i,x}(T)+\frac{1}{\phi^{\theta}_i(T)}E[\phi^{\theta}_i(T) \partial_x h_{i,m}(T)],\\
i\in \{1,2,\ldots, n\}
\end{array}
\right.
\ee
where,
\be\label{phirst2}
\phi^{\theta}_i(T):=e^{\theta_i [h_i(\bar x(T), \bar m(T))+ \int_0^Tf_i(t,\bar x(t), \bar m(t), \bar u(t)) dt]}. 
\ee

 \subsection{Dynamic programming for risk-sensitive mean-field-type games} \label{dppgame}
We establish a dynamic programming principle in infinite dimension. We first write the objectives as a function of the infinite dimensional state $\mu$ which satisfies  the Fokker-Planck-Kolmogorov forward equation
\be  \label{fpk2}
\mu_t=-\partial_x[ \bar{b} \mu] -\partial_z(f \mu)+\frac{1}{2}\partial_{xx}(\sigma^2 \mu)=:\tilde{b},\ \ 
\ee with the initial distribution $\ \mu(0,dx,dz)=m_0(dx)\delta_0(dz).$
The advantage now is that $\mu(.)$ is a deterministic object  
the cost can be rewritten in a deterministic manner as
$$J^{\theta}_i=\int \mu(T, dx,dz) \ e^{\theta_i(z_i+h_i(x, \int_{\tilde{z}}\mu(T,.,d\tilde{z})))}\ .$$ This a terminal cost in the sense it is evaluated only at $\mu(T,.)$. Since there is no running cost, one can write directly the HJB equation using classical calculus of variations for
$$V_i(t,\mu(t,.))=\inf_{u}  \int \mu(T, dx,dz) \ e^{\theta_i(z_i+h_i(x, \int_{\tilde{z}}\mu(T,.,d\tilde{z})))} ,$$ starting from  $\mu(t,.)$ at time $t:$

$$0=V_{i,t}+\inf_{u}[ \langle \tilde{b}, V_{i,\mu} \rangle ],$$ where
\begin{eqnarray}\langle \tilde{b}, V_{i,\mu} \rangle &=&\int  V_{i,\mu} (\mu)(\tilde{x},\tilde{z})\tilde{b}(\tilde{x},\tilde{z}) \ d\tilde{x}d\tilde{z}\\
&=& -\int  V_{i,\mu} (\mu)({x},{z})   \{\partial_x[ \bar{b} \mu] +\partial_z(f_i\mu) \}    \\ &&   +\int  V_{i,\mu} (\mu)({x},{z}) \frac{1}{2}\partial_{xx}(\sigma^2 \mu)                    
\ d{x}d{z}\\
&=& \int \left[  \bar{b}\partial_xV_{i,\mu}     +f_i\partial_zV_{i,\mu}        +\frac{\sigma^2}{2} \partial_{xx}V_{i,\mu}  \right] \mu(t,dx,dz)    \nonumber               
\end{eqnarray}
As we can see, the required working state for player $i$ is $(x,z_i),$ therefore the partial derivatives of  $V_i$ with respect to $z$ are only considered for $z_i.$
The risk-sensitive HJB minimum principle yields
\begin{eqnarray} \nonumber 0=V_{i,t}+\int \inf_{u}\left[ \bar{b} \partial_xV_{i,\mu}      +f_i\partial_zV_{i,\mu} \right. \\  \label{hjbrs} \left. +\frac{\sigma^2}{2} \partial_{xx}V_{i,\mu}  \right] \mu(t,dx,dz) ,\end{eqnarray} 
This is an infinite dimensional PDE on $(t,\mu).$ Below we provide a simpler optimality equation (i.e., the state will be in finite dimension) by setting
$p^*_i=\frac{\partial_xV_{i,\mu}}{\partial_zV_{i,\mu}}$ and $V_{i,\mu}(\mu)(t,x,z)=v^*_i(t,x,z).$
Differentiating (\ref{hjbrs}) with the respect to $\mu$ one gets
\begin{eqnarray}  \label{hjbrs2}0=\partial_tV_{i,\mu}(\mu)(t,x,z)+ \hat{H}_{i}(\mu)\\  \nonumber+\int \hat{H}_{i,\mu}(\mu)(t,\tilde{x},\tilde{z})\mu(t,d\tilde{x},d\tilde{z}) \end{eqnarray}
where 
\begin{eqnarray}
\hat{H}_i&=& \bar{b} \partial_xV_{i,\mu}      +f_i\partial_zV_{i,\mu}   +\frac{\sigma^2}{2} \partial_{xx}V_{i,\mu} 
\end{eqnarray} 
where $
{H}^*_i(t,x,p^*,m)= \inf_{u}[\bar{b} p^*_i+ f_i].
$
\begin{definition} The function 
$v^*_i(t,x,z):=V_{i,\mu}(\mu)(t,x,z)$ is called Dual Function associated with the best response value of player $i.$
\end{definition}
$v^*_i(t,x,z):=V_{i,\mu}(\mu)(t,x,z)$ solves the  PDE (\ref{adjointeq1rst2}) where the state is now reduced to $(x,z)$ which is in finite dimension.

Below we show that if there exists a  dual function (in the sense of weak derivatives) then its weak derivatives provide a risk-sensitive SMP.
\subsection{Dual functions  associated with the best response values }
In a risk-neutral setting, Bensoussan et al.  have established in \cite{alainbook} a partial differential equation as a 
 necessary condition for optimality under smoothness assumption. We apply the methodology to the risk-sensitive case.
The basic idea consists to write the optimality inequality as $J^{\theta}_i(u_i+ \epsilon d_i, u_{-i} ,m^{u_i+ \epsilon d_i,u_{-i}})- J^{\theta}_i(u,  m^u) \geq 0$ for the cost functional  $J^{\theta}.$ 
By introducing the auxiliary state $z$ such that
$ dz_i= f_i(.) \ dt,\  \ z_i(0)=0,$ the risk-sensitive game problem is transformed into mean-field-type game problem without running cost. The terminal cost is
$
e^{\theta_i [z_i(T)+h_i(x(T),m(T))]}.
$
Since the state is augmented to be $(x,z),$ the first order adjoint process becomes $(p_{1i}, p_{2i}, q_i)$ and the unmaximized Hamiltonian is $\bar{b} p_{1i}+ f {p}_{2i} +\sigma q_i.$ Since the diffusion does not depend on the control $u,$ the derivative of this term with the respect to $u$ can be written as
$
{p}_{2i} \partial_u [ \bar{b} \frac{p_{1i}}{{p}_{2i}}+ f_i], \ 
$
whenever $ {p}_{2i}\neq 0.$ 

Let $v^*_i(t,x,z)$ be the dual function defined above, satisfying  (\ref{adjointeq1rst2})
where  $\mu=\mathcal{L}(x^{u}(t),z^u(t)),$  the $x-$marginal  is $ m(t,.)=\int  \mu(t,.,dz)\ $  and $$
{H}_i^*(t,x,p^*,m)= \inf_{u}[\bar{b} p^*_i+ f_i].
$$ 
The terminal condition is
$$v^*_i(T,x,z)=e^{\theta_i (z_i+h_i(x,m(T)))} $$ $$+\theta_i \int h_{i,m}(\tilde{x},m(T))(x)  e^{\theta_i (\tilde{z}_i+h_i(\tilde{x},m(T)))}\ \mu(T, d\tilde{x},d\tilde{z}).
$$

If $(v^*_i)_i$ solves the dual equation (\ref{adjointeq1rst2}) then
$$
\lim_{\epsilon  \rightarrow  0_+}\frac{d}{d\epsilon}J^{\theta}_i=$$ $$\int_{(t,x)\in [0,T]\times \mathcal{X}}\  v^*_{i,z} H_{i,u}(t,x,m,u, \frac{v^*_{i,x}}{ v^*_{i,z}}, \frac{\sigma v^*_{i,xx}}{ 2v^*_{i,z}})\ \mu(t,x,z) dx dt
$$
and ${H}_{i,u}=0$ for interior  optimal control $u.$
Let $v_i$  be a function in the Lebesgue space $L^1(I)$, with $I=[a,b]$ a compact interval of $\mathbb{R}, \ a<b.$ We say that $w\in L^1(I)$ is a ''weak derivative'' of  $v$ if, 
$$\int_I v(t)\varphi'(t)dt=-\int_I w(t)\varphi(t)dt, $$
for all infinitely differentiable functions $\varphi $ with $\varphi(a)=\varphi(b)=0.$

Equation (\ref{adjointeq1rst2}) is an interesting partial differential equation. Indeed, if there is a solution $v^*(t,x,z)$ to (\ref{adjointeq1rst2}) that is three times weakly differentiable  then the partial weak derivatives of $ v^*(t,x,z)$ solves the risk-sensitive SMP (\ref{rs-firstAD-1}). 
Below we identify explicitly  the processes solution to the  risk-sensitive SMP.
Let $p^*_i(t)=  \frac{v^*_{i,x}(t,x^{u}(t),z^u(t))}{v^*_{i,z}(t,x^{u}(t),z^u(t))}= \frac{\partial_xV_{i,\mu}(t,x^{u}(t),z^u(t))}{\partial_zV_{i,\mu}(t,x^{u}(t),z^u(t))}$ where the derivatives are taken in a distribution sense (weak derivative).
Then the process $p^*$ evaluated at the optimal trajectory solves the backward SDE:
\begin{eqnarray}
dp^*_i&=&-\left[ H^*_{i,x}  +\frac{1}{\eta^*_i} E [ \eta^*_i \partial_xH^*_{i,m}] \right]dt \nonumber \\
&& - \left[  \sigma_x(q^*_i+ p^*_i l^*_i)+ q^*_i l^*_i\right] dt+ q^*_i dB,
\end{eqnarray}
where
$$
q^*_i=- p^*_i l^*_i+\frac{\sigma v^*_{i,xx}}{\eta^*_i},\ \ l^*_i=\sigma  \partial_x[\log \eta^*_i], $$ $$  \eta^*_i:=v^*_{i,z}(t,x,z)=\partial_z V_{i,\mu}(\mu)(t,x,z),
$$

We impose a strong smoothness on $v^*$ and show that $\eta^*$ solves a backward SDE similar to the one satisfied by $v^{\theta}.$
\begin{proposition} \label{lemmatx1}
$\eta^*_i:=v^*_{i,z}(t,x,z)$ solves the  backward SDE: 
\be d\eta^*_i= \eta^*_i l^*_i dB,\ \ \ \  \eta^*_i(T)=\theta_i e^{\theta_i[z_i(T)+h_i(x(T), m(T))]}. \ee
\end{proposition}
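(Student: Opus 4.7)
The plan is to apply Itô's formula to $\eta^*_i(t) = v^*_{i,z}(t,x(t),z_i(t))$ along the optimal controlled trajectory, read off the diffusion coefficient directly, and then deduce the vanishing of the drift by differentiating the dual PDE~(\ref{adjointeq1rst2}) once in $z_i$. Throughout I would assume enough smoothness of $v^*_i$ in $(x,z)$; if only weak derivatives exist, I would invoke the interpretation already flagged in the excerpt.

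Using $dx=\bar b\,dt+\sigma\,dB$ and $dz_i=f_i\,dt$, Itô's formula yields
\begin{align*}
d\eta^*_i = \bigl( \partial_t v^*_{i,z} + \bar b\, v^*_{i,xz} + f_i\, v^*_{i,zz} + \tfrac{1}{2}\sigma^2 v^*_{i,xxz} \bigr) dt + \sigma\, v^*_{i,xz}\, dB.
\end{align*}
The diffusion part is immediate: by the definition $l^*_i := \sigma\,\partial_x \log \eta^*_i = \sigma\, v^*_{i,xz}/v^*_{i,z}$ we get $\sigma v^*_{i,xz} = \eta^*_i l^*_i$, which is exactly the claimed dispersion coefficient.

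It remains to show the drift is zero. I would differentiate the dual PDE~(\ref{adjointeq1rst2}) with respect to the local variable $z_i$. The mean-field integral term $\int v^*_{i,z}(\tilde w) H^*_{i,m}(\tilde x,\tilde p,m)(x)\,\mu(d\tilde w)$ carries no $z_i$-dependence at the evaluation point, since $z_i$ enters neither the integration variable $\tilde w=(\tilde x,\tilde z)$, the external argument $x$, nor the measure $\mu$; its derivative therefore vanishes. For the term $v^*_{i,z} H^*_i(t,x,v^*_{i,x}/v^*_{i,z},m)$, I would invoke the envelope identity $\partial_p H^*_i = \bar b$ (valid at interior optima since $u\in\arg\min_u H_i$) together with $H^*_i = \bar b p^*_i + f_i$ where $p^*_i = v^*_{i,x}/v^*_{i,z}$. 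A short chain-rule computation then gives
\begin{align*}
\partial_{z_i}\!\bigl(v^*_{i,z} H^*_i\bigr) = v^*_{i,zz}\bigl(H^*_i - \bar b\, v^*_{i,x}/v^*_{i,z}\bigr) + \bar b\, v^*_{i,xz} = f_i\, v^*_{i,zz} + \bar b\, v^*_{i,xz}.
\end{align*}
Adding the $\partial_{z_i}$-derivative of $\tfrac{1}{2}\sigma^2 v^*_{i,xx}$ reproduces exactly the drift appearing in the Itô expansion above, so that drift vanishes identically.

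Finally, the terminal condition follows by direct differentiation of the expression given for $v^*_i(T,x,z)$: only the first summand $e^{\theta_i(z_i+h_i(x,m(T)))}$ depends on the local $z_i$, the integral term being taken over $\tilde w$, so $v^*_{i,z}(T,x,z)=\theta_i e^{\theta_i(z_i+h_i(x,m(T)))}$, which is precisely $\eta^*_i(T)$. The main obstacle in executing this plan rigorously is regularity: applying Itô's formula to $v^*_{i,z}$ demands enough smoothness of $v^*_i$ in $(x,z)$, which is not automatic under the weakened, non-differentiable assumptions arising from the $L^\alpha$-norm structure of $\bar b$. One therefore has to either work with the weak/distributional derivatives introduced earlier in the excerpt, or strengthen hypotheses (e.g.\ $b>0$, as in Proposition~\ref{EU-rs-first}) to recover classical smoothness of the dual function.
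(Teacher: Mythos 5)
Your proof follows the same route as the paper's: Itô's formula applied to $v^*_{i,z}$ along the controlled trajectory, cancellation of the drift by differentiating the dual PDE (\ref{adjointeq1rst2}) in $z_i$ (using that the mean-field integral term is $z_i$-independent), and identification of the diffusion coefficient as $\eta^*_i l^*_i$ via $l^*_i=\sigma\partial_x\log\eta^*_i$. You additionally supply the chain-rule/envelope computation showing $\partial_{z_i}\bigl(v^*_{i,z}H^*_i\bigr)=f_i\,v^*_{i,zz}+\bar b\,v^*_{i,xz}$ and the explicit verification of the terminal condition, two steps the paper only asserts, so the argument is correct and, if anything, more complete than the original.
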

\begin{proof} 
By Ito's formula, we have
\begin{eqnarray}d\eta^*_i&:=& dv^*_{iz}(t,x,z)\\ \nonumber
&=& [v^*_{i,zt}+v^*_{i,xz} \bar{b}+v^*_{i,zz} f_i+\frac{1}{2}\sigma^2v^*_{i,xxz}]dt+\sigma v^*_{i,xz} dB.
\end{eqnarray}
From (\ref{adjointeq1rst2}) it is clear that the partial (weak) derivative of the integral term with the respect to $z$ is zero (because it does not depend on $z$).
We differentiate  (\ref{adjointeq1rst2}) to get
$$v^*_{i,zt}+\partial_z\left[ v^*_{i,z} {H}^*_i(t, x, \frac{v^*_{i,x}}{v^*_{i,z}}, m)\right]+\frac{1}{2}\sigma^2 v^*_{i,zxx}=0,$$
which means that the drift term  is
$v^*_{i,tz}+v^*_{i,xz} \bar{b}+v^*_{i,zz} f+\frac{1}{2}\sigma^2v^*_{i,zxx}=0.$
Thus, $ d\eta^*_i= \sigma v^*_{i,xz} dB.$
Lets compute the diffusion coefficient more explicitly:
\begin{eqnarray}\sigma v^*_{i,xz} &=&
\frac{v^*_{i,z}}{v^*_{i,z}}\sigma v^*_{i,xz} 
=\eta^*_i l_i^*. 
\end{eqnarray}
Hence, one gets
$$d\eta^*_i= \eta^*_i l^*_i dB,\ \ \ \  \eta^*_i(T)=\theta_i e^{\theta_i[z_i(T)+h_i(x(T), m(T))]}.
$$
\end{proof}
\begin{proposition} \label{propeta} The function $(\eta^*_1,\ldots, \eta^*_n) $ solves the partial differential equation:
\be 0=\eta^*_{i,t}+\eta^*_{i,x} \bar{b}+\eta^*_{i,z} f_i+\frac{1}{2}\sigma^2\eta^*_{i,xx}\ee whenever these  derivatives  make sense. Moreover $\eta^*_i$ has a constant sign and has the same sign as $\theta_i.$
\end{proposition}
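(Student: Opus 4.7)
The plan is to piggyback on the Itô decomposition already established in the proof of Proposition~\ref{lemmatx1}, and then exploit the Doléans-Dade structure of the resulting SDE to obtain the sign claim. Both parts of the statement flow out of ingredients that are essentially already on the table.

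First I would apply Itô's formula to $\eta^*_i(t) = v^*_{i,z}(t, x(t), z(t))$, viewing $v^*_{i,z}$ as a sufficiently smooth function of $(t,x,z_i)$ in the weak sense. The Itô expansion produces the drift
$$\eta^*_{i,t} + \eta^*_{i,x}\bar{b} + \eta^*_{i,z} f_i + \tfrac{1}{2}\sigma^2 \eta^*_{i,xx}$$
together with the diffusion $\sigma\eta^*_{i,x}\,dB$. Proposition~\ref{lemmatx1} already identifies $d\eta^*_i = \eta^*_i l^*_i\, dB$, which is driftless. Equating the drift with zero immediately yields the PDE in the statement, precisely at points where the four weak derivatives involved are well-defined, which is the caveat built into the statement.

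Second, for the sign, I would invoke the explicit stochastic exponential form of the solution of $d\eta^*_i = \eta^*_i l^*_i\, dB$. Applying Itô to $\log|\eta^*_i|$ gives, for any $s,t\in[0,T]$,
$$\eta^*_i(t) = \eta^*_i(s)\exp\!\left(\int_s^t l^*_i(r)\,dB_r - \tfrac{1}{2}\int_s^t (l^*_i(r))^2\, dr\right),$$
and the stochastic exponential factor is strictly positive almost surely. Hence $\mathrm{sgn}\,\eta^*_i(t)$ is constant on $[0,T]$; equivalently, $\eta^*_i$ cannot cross zero since $\eta^*_i \equiv 0$ is the unique solution of this linear SDE with zero data. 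Taking $t=T$, the terminal value $\eta^*_i(T) = \theta_i e^{\theta_i[z_i(T)+h_i(x(T),m(T))]}$ is $\theta_i$ multiplied by a strictly positive exponential, so the common sign coincides with $\mathrm{sgn}\,\theta_i$.

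The main obstacle I anticipate is the rigorous justification of the Itô expansion when $v^*_i$ only admits weak derivatives of the orders involved. A clean route is to mollify $v^*_i$ in $(x,z)$, apply the classical Itô formula to the smooth approximants, and pass to the limit using local $L^1$ convergence of the weak derivatives together with the $L^2$ bounds provided by Proposition~\ref{EU-rs-first}; an alternative is to invoke an Itô-Krylov formula for Sobolev-type solutions. The sign argument itself is insensitive to these technicalities, as it rests only on the linear SDE $d\eta^*_i = \eta^*_i l^*_i\, dB$ and on the explicit terminal value, both of which are already in hand.
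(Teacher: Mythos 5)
Your argument is correct, but it reverses the paper's logical order and adds something the paper omits. The paper's proof of Proposition~\ref{propeta} is a single step: take the weak $z_i$-derivative of the dual equation (\ref{adjointeq1rst2}) directly (the integral term drops out since it is $z$-independent, and $\partial_z\bigl[v^*_{i,z}H^*_i\bigr]=\bar b\,v^*_{i,xz}+f_i\,v^*_{i,zz}$ by the envelope property of the infimum), which yields the PDE as an identity in $(t,x,z)$ wherever the derivatives exist. You instead start from the conclusion of Proposition~\ref{lemmatx1} — that $d\eta^*_i=\eta^*_i l^*_i\,dB$ is driftless — and equate the It\^o drift of $v^*_{i,z}(t,x(t),z(t))$ to zero. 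This is the same computation read backwards (the paper proves \ref{lemmatx1} precisely by first establishing the differentiated PDE), so there is a mild circularity of exposition, and strictly speaking your route only gives the PDE along the optimal trajectory rather than pointwise in $(t,x,z)$; the direct $\partial_z$-differentiation avoids that caveat. On the other hand, your second part is a genuine improvement: the paper gives no argument at all for the sign claim, whereas your Dol\'eans-Dade exponential representation
\begin{equation*}
\eta^*_i(t)=\eta^*_i(s)\exp\!\left(\int_s^t l^*_i(r)\,dB_r-\tfrac12\int_s^t (l^*_i(r))^2\,dr\right)
\end{equation*}
combined with the terminal value $\theta_i e^{\theta_i[z_i(T)+h_i(x(T),m(T))]}$ cleanly delivers constancy of the sign and its agreement with $\mathrm{sgn}\,\theta_i$ (along the trajectory; to get the sign of the function everywhere one can equivalently use the Feynman--Kac representation of the backward Kolmogorov equation you just derived). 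Your closing remarks on mollification to justify It\^o's formula under weak differentiability address a technical point the paper leaves implicit.
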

\begin{proof}
This follows from a weak derivative with the respect to $z_i$ in  Eq. (\ref{adjointeq1rst2}).
\end{proof}
Note that the function $m$ in Proposition $\ref{propeta}$ is the marginal of $\mu$ with the respect to $x$ and $\mu$ solves the Fokker-Planck-Kolmogorov forward equation with drifts $(\bar{b}, f)$ and diffusion coefficient $(\sigma, 0):$
\be  \label{fpk}
\mu_{t}+\partial_x[ \bar{b} \mu] +\partial_z(f \mu)-\frac{1}{2}\partial_{xx}(\sigma^2 \mu)=0,\ \ \ \
\ee
$\mu(0,dx,dz)=m_0(dx)\delta_0(dz).$
In view of Proposition \ref{lemmatx1}, 
$
(\bar{p}_i,\bar{q}_i, v^{\theta}_i, \ell_i)=(-p^*_i, q^*_i, \frac{\eta^*_i}{\theta_i}, \theta_i l^*_i)$ solves the  risk-sensitive SMP  (\ref{rs-firstAD-1})
and 
\begin{eqnarray}
dp^*_i&=&-\left[ H^*_{i,x}  +\frac{1}{\eta^*_i} E [ \eta^*_i \partial_x H^*_{i,m}] +  \sigma_x(\frac{\sigma v^*_{i,xx}}{\eta^*_i})\right]dt\\ && \nonumber
 -\left[ - p^*_i( l^*_i)^2+\frac{\sigma l^*_i v^*_{i,xx}}{\eta^*_i}\right]dt + (- p^*_i l^*_i+\frac{\sigma v^*_{i,xx}}{\eta^*}) dB,
\end{eqnarray}

\begin{eqnarray}p^*_i(T)=h_{i,x}(x(T),m(T)) + \nonumber \\ \nonumber
\frac{\tilde{E}\left[  e^{\theta_i (\tilde{z}_i(T)+h(\tilde{x}(T),m(T)))}\ \partial_x h_{i,m}(\tilde{x}(T),m(T))(x(T)) \right]}{e^{\theta_i ({z}_i(T)+h_i(x(T),m(T)))}}
\end{eqnarray}

The function $v^*(t,.)=\partial_{\mu}V$ is not the value function in the sense of Bellman because of the presence of the term $E[H_{i,m}]$ in Eq. (\ref{adjointeq1rst2}). $v^*(t,.)$ is the adjoint function (dual function) associated to mean-field-type best-response problem. Interestingly, in the mean-field free case, i.e., when $h_{i,m}=0, f_{i,m}=0, b_{i,m}=0,$ the dual function
 $(v^*_1(t,.),\ldots, v^*_n(t,.))$  coincides with the best-response value function of the risk-sensitive game problem with augmented state $(x,z).$  

{\color{black}
\section{Virus Spread over an evolving  network} \label{sec:virus}
WiFi network security has gained significant attention in research and industrial communities as a result of the global connectivity provided by the Internet. This has led to a variety of traditional defense mechanisms ranging from cryptography, firewalls, antivirus software, to intrusion detection systems.  Table  1  displays a sample
number of network attacks by major geographic region (State
or Country) with more  than 100 000 attacks. See \cite{onlineref} for more details on real time web attack monitoring.

  \begin{table*}  \caption{Internet attacks over the globe. An increase of around 13.02\%}  \label{state000fig0.}
\begin{tabular}{|c|l|l|l|l|l|l|l|l|}
\hline
\multicolumn{9}{|c|}{\textbf{Online Attacks}}                                   \\ \hline
\multicolumn{3}{|c|}{North America} & Europe  &  & Asia       &  & Australia &  \\ \hline
\multirow{7}{*}{}  & New York    & 617 150& Germany &  571255& India      &  2 353 001& Australia & 705594 \\ \cline{2-9} 
                   & Virginia    &  452916& Romania & 226018 & China      & 772447  &           &  \\ \cline{2-9} 
                   & Illinois    &  401766& Sweden  & 138543 & Bangladesh &  106102&           &  \\ \cline{2-9} 
                   & California  & 343627 &         &  &            &  &           &  \\ \cline{2-9} 
                   & Texas       & 309426 &         &  &            &  &           &  \\ \cline{2-9} 
                   & Ohio        & 110137 &         &  &            &  &           &  \\ \cline{2-9} 
                   & Florida     &  106779&         &  &            &  &           &  \\ \hline
\end{tabular}
\end{table*}

A virus that spreads through WiFi  networks as effectively as a human cold moves through cities, airports, public transport  areas has been explored recently. 
The virus can travel between WiFi networks via Access Points (APs) that connect households and businesses to WiFi networks. It can also propagate through femto cell and small cell networks.
We denote by $x$ the state of the entire network. $x$ could represent the number of access points that can be reached with infected relays (hotspots) at a specific period of the day. Since the number of access points that are active is highly stochastic and the number of nodes in the network is time-varying, $x$ is a random process. We do not consider a mass-action principle because there is no conservation of mass in this case, the population itself is random. It is unclear that the random process can be driven by Brownian but here we assume a small noise effect for simplicity. Users move over several geographical areas and some of them may carry portable wifi access points. Thus, the network is mobile and random. Each access point may interact with other hotspots in a certain neighborhood of communication and then the information/host propagates over multiple hops. In this setting an increasing rate is observed within the last decade. To capture this phenomenon, we propose an $L^{\alpha}-$norm drift model for the rate\cite{ref7}. The control parameter $u_1$  of the attacker may represent for example  the rate at which the virus  attempts transmission over the access points.
 \subsection{State dynamics models}
With explosive growth of mobile devices and the internet of things (IoT),  there is an increasing number of vulnerable devices and machines connected to these networks and hotspots so that the mass is not conserved. The population has a tendency the growth (in expectation). To Illustrate this we consider an information propagation model where the state is given by
$$ dx=\gamma(x) dt +\sigma dB, $$ where $\gamma(x)= \kappa x(1-\frac{x}{K}),$  $\kappa, K>0, $  $x(0)\in (0, K).$ 
\begin{figure}[h!]
\caption{State dynamics with three different noises.}
  \centering
    \includegraphics[width=0.9\textwidth]{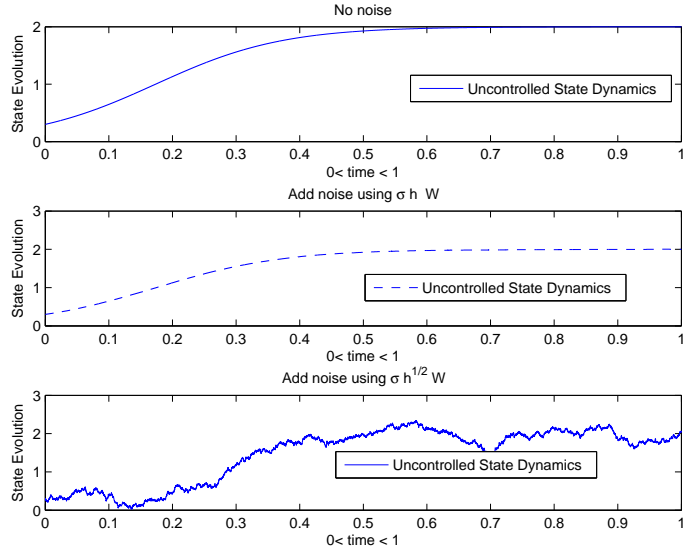}\label{state001fig1}
    
\end{figure}
Figure \ref{state001fig1} represents the evolution of the state under different noises. In Figure \ref{state001fig1} the parameters are $\kappa=10, K=2,$ and $\sigma\in \{0, 1\}$ and different noise terms are plotted. Starting from a initial state value $x(0)=0.3,$ we observe that the population state has tendency to move  around $2$ within the time interval $[0,1].$

\subsection{The state needs to be controlled}
Due to the presence of malicious attack in the network, there are lot of 
 security, privacy concerns so that the state needs to be controlled \cite{marriage}. To illustrate the model we introduce an attacker and a defender. Each of them has a control parameter, and has to make a certain decision on those parameters.
 $$ dx=[\gamma(x) +u_1-u_2] dt +\sigma dB, $$  $u_1$ is the attacker control strategy and $u_2$ is the defender control strategy.
 \begin{figure}[h!]
\caption{Effect of control effort $e=u_2-u_1=0.3$ in the state.}
  \centering
    \includegraphics[width=0.9\textwidth]{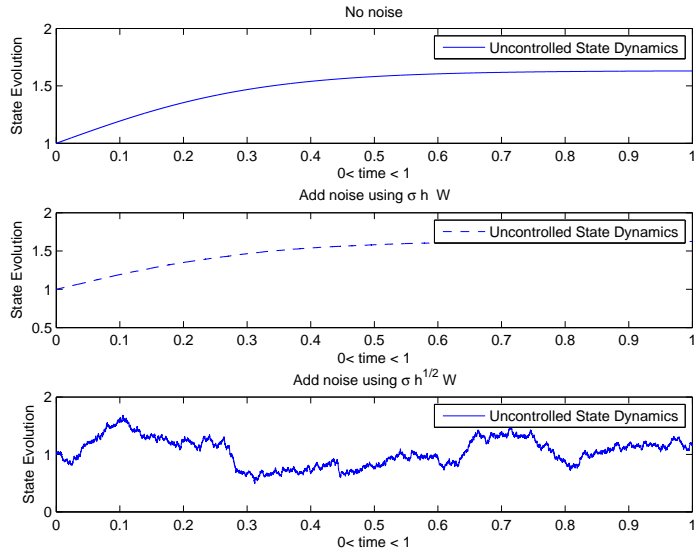}\label{state002fig2}
     
\end{figure}
  In Figure \ref{state002fig2} we represent the case where a significant control effort $e=u_2-u_1=0.3$ is injected into the system. We observe that the control affects significantly the state dynamics and help  towards a certain goal. For a significant effort $e=u_2-u_1=0.3$ invested into security, the state of infection can stay below the level $2$ starting from level $1.$ This means that control helps to reduce the infection rate and improve security.  However, the location of mobile devices and WiFi hotspots may be important in some cases, specially when local interaction and communications arise. In order to capture this phenomenon we introduce a non-linear behavior via the geographical location distribution and the intensity of interaction at time $t$ as  $m(t,dy)$  and we introduce 
  $b(t,x,y, u_1,u_2)= y[\gamma(x)+ u_1 -u_2],\ $  and the state dynamics becomes 
  $$ dx= |\gamma(x) +u_1-u_2| [\int_y y^{\alpha} m(t,dy)]^{\frac{1}{\alpha}} dt +\sigma dB. $$
  
  The variable $y$ can be seen as the intensity of interaction of infected devices/hosts. 
  \begin{figure}[h!]
  \caption{Effect of the mean-field term when the effort is $e=u_2-u_1=0.3$. Open-Loop case.}
  \centering
    \includegraphics[width=0.9\textwidth]{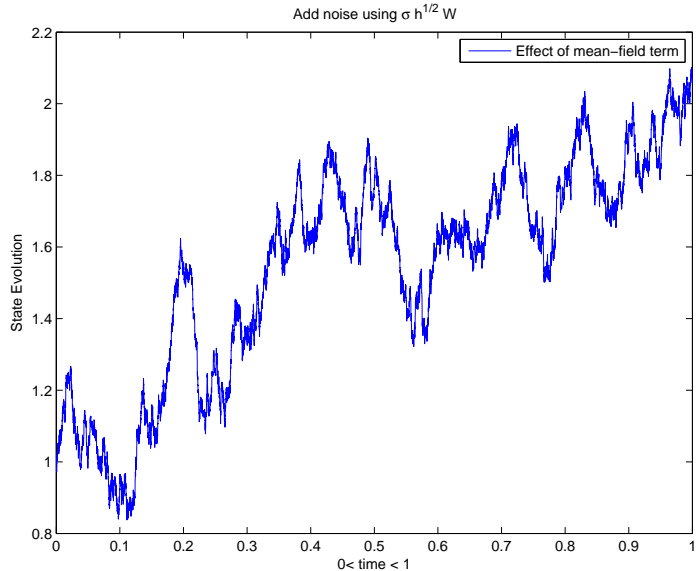}\label{state003fig3}
    
\end{figure}

 As we can see in Figure \ref{state003fig3},  the mean-field term $[\int_y y^{\alpha} m(t,dy)]^{\frac{1}{\alpha}}$ affects significantly the state dynamics in a multiplicative manner. The infection time increases rapidly with the mean-field term. Figure \ref{state004fig4} uses a state feedback strategy in the form of $[\int_y y^{\alpha} m(t,dy)]^{\frac{1}{\alpha}} x,$ with $\alpha=1.2.$    We observe that the infection state is significantly reduced compared to the open-loop case of Figure \ref{state003fig3}. This is illustrated in Figure \ref{state006} for several initial states.  Hence, it is important to strategically control the mean-field term so that the infected machines remains limited and the damage minimized. In order to do such a minimization we introduce below some objective functions.
 
 \begin{figure}[h!]
   \caption{Effect of the mean-field term under state and mean-field feedback strategies.}
  \centering
    \includegraphics[width=0.9\textwidth]{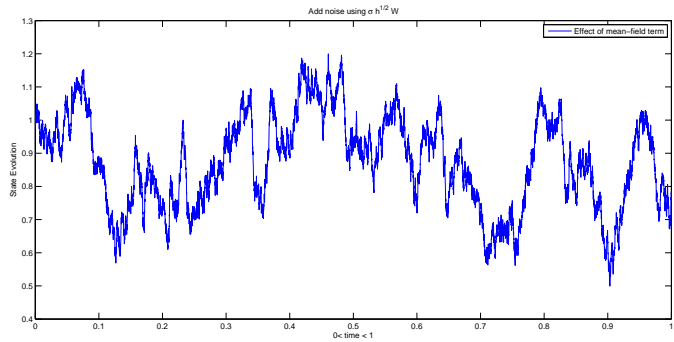}\label{state004fig4}
   
\end{figure}

 \begin{figure}[h!]
   \caption{Feedback strategies help to control and maintain the state below a certain range with high probability.}
  \centering
    \includegraphics[width=0.9\textwidth]{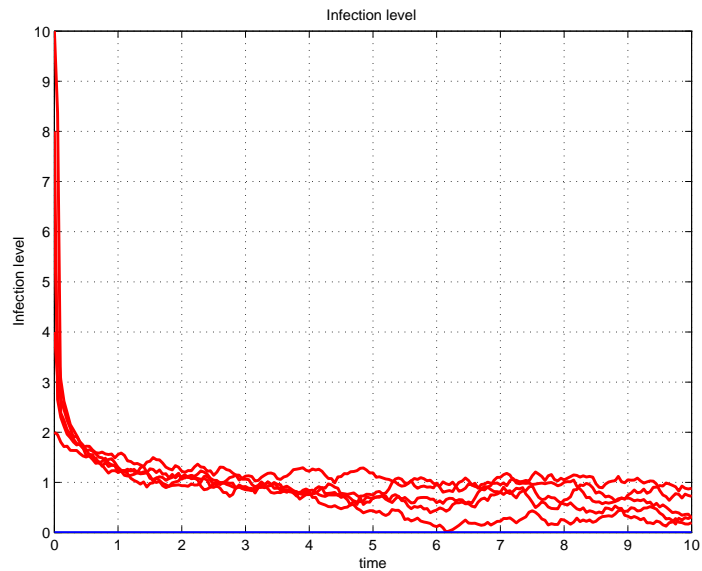}\label{state006}
\end{figure}

 \subsection{Objectives}
In the context of delay/disruption tolerant networks it can be shown that the delay and the probability of receiving the information have a natural risk-sensitive structure via Poisson arrival rates. However, the attacker and the network defense may not have the same sensitivity  when facing the risk. We denote by $\theta_1=\theta_a$ the attacker risk-sensitivity index and by $\theta_2=\theta_d$ the defender risk index. The cost of the attacker is  $f_1(x,m,u_1, u_2)=\frac{1}{2}u_1^{2}$ and  its terminal benefit (opposite signed for minimization) is $h_{1}= -\frac{c_1}{ \alpha}x^{ \alpha}-\frac{\bar{c}_1}{ \alpha}m_{ \alpha},$   where $m_{ \alpha}$ denotes the $ \alpha-$th moment of the state process. The goal for the attacker is then to find a tradeoff between the attack effort  cost $-\frac{1}{2}u_1^{2 }$ and the benefit $\frac{c_1}{ \alpha}x^{ \alpha}+\frac{\bar{c}_1}{ \alpha}m_{ \alpha}.$

\begin{figure}[h!]
  \caption{A typical one step cost function.}
  \centering
    \includegraphics[width=0.9\textwidth]{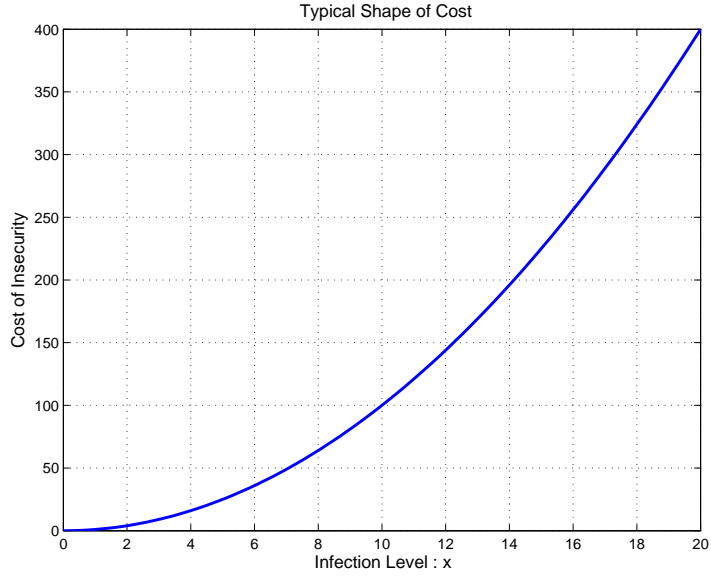}\label{shapecosttemeps}
    
\end{figure}

The  cost of the defender (could be the system administrator) is decomposed into damage cost and security investment loss  is  $ f_2= \frac{1}{2} x^2 +\frac{1}{2}u_2^2,$ $h_{2}= \frac{c_1}{ \alpha}x^{ \alpha}+\frac{\bar{c}_1}{ \alpha}m_{\alpha}, c_1\geq 0,\bar{c}_1\geq 0.$ For simplicity, the drift is chosen as $\bar{b}=\| b \|_{ \alpha}$ where $b(t,x,y, u_1,u_2)= y[\gamma(x)+ u_1 -u_2],\ \ \gamma(x)\geq 1.$ The control variables are limited to the interval $[0,1]$ at any time and the diffusion coefficient is  system-size dependent: $\sigma_{n(t)}:=\frac{\sigma}{n(t)}$ where $n(t)$ is a random variable representing the (active) system size at $t.$ Since there are multiple defense strategies, here we do pull them together in a cooperative manner as an ideal target. However, as observed in practice, the defender  may not coordinate their defense strategies due to non-alignment of objectives and/or professional privacy issues. Figure \ref{shapecosttemeps} represents a typical instantatenous cost. In Figure \ref{state005fig5} we plotted the evolution of a typical cost (random) invested into security over time.

\begin{figure}[h!]
  \caption{Evolution of the cost invested into security.}
  \centering
    \includegraphics[width=0.9\textwidth]{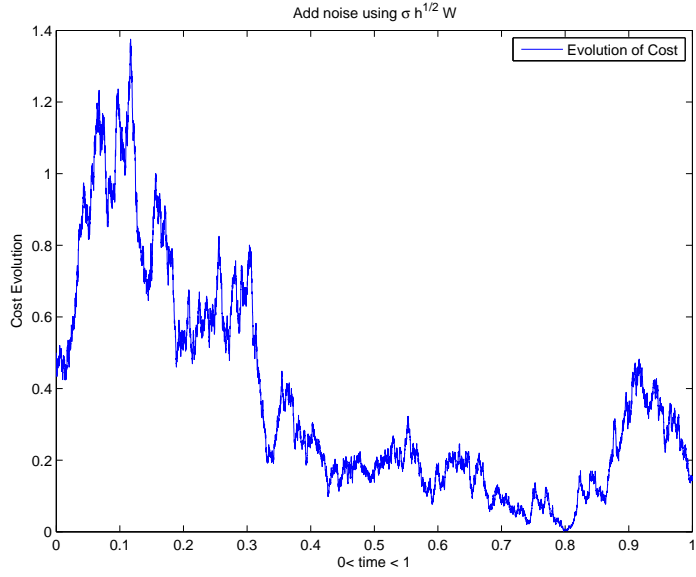}\label{state005fig5}
    
\end{figure}

These functions are not bounded, and we cannot use directly the existence results established above. However,  we provide the optimality equation for the interior case  and derive  a risk-sensitive SMP. 

$H_i=m_{\alpha}^{1/\alpha} [\gamma(x)+ u_1 -u_2] p_i + f_i.
$

The  attacker's optimal strategy  is
$$
  u_1 = [-m_{\alpha}^{1/\alpha} p_1]_0^1,
$$
where  $[a]_0^1:=\min (1, \max(0,a)).$ 
The defender's   optimal strategy  is
$$
u_2= [m_{\alpha}^{1/\alpha} p_2]_0^1,
$$
where $p_1, p_2, $ solve the  risk-sensitive SMP system :
$$dp_i=-\{ H_{i,x} +\frac{1}{v^{\theta}_i }E[v^{\theta}_i \partial_x H_{i,m}] \} dt+ q_i(-\theta_i l_i dt+dB).$$
where 
$H_{i,x}=m_{\alpha}^{1/\alpha} \gamma'(x)p_i+f_{i,x},\ $ $ \ f_{1,x}=0, f_{2,x}=x,$
 $H_{i,m}(., t, \xi,m)(x)= \frac{x^{ \alpha}}{ \alpha m_{ \alpha}^{ \alpha-1}} [\gamma(x)+ u_1 -u_2] p_i,$ and
$$\partial_x H_{i,m}(., t, \tilde{X},m)(x)=$$ $$ \frac{x^{ \alpha-1}}{m_{ \alpha}^{ \alpha-1}} [\gamma(x)+ u_1 -u_2] p_i+ \frac{x^{ \alpha}}{ \alpha m_{ \alpha}^{ \alpha-1}} \gamma'(x)p_i. $$
\subsection{Backward-Forward System}

\begin{equation}\label{SDEuBF}
\left\{\begin{array}{lll}
dp_1=-\{ H_{1,x} +\frac{1}{v^{\theta}_1 }E[v^{\theta}_1 \partial_x H_{1,m}] \} dt+ q_1(-\theta_1 l_1 dt+dB),\\
dp_2=-\{ H_{2,x} +\frac{1}{v^{\theta}_2 }E[v^{\theta}_2 \partial_x H_{2,m}] \} dt+ q_2(-\theta_2 l_2 dt+dB),\\
dx^u(t)=\bar{b}(.,t,x^u(t), m^u(t), u(t)) \ dt+ \sigma dB(t), \\ x^u(0)=x_0,\
m^u(t):=\mathcal{L}(x^u(t)),\
\end{array}\right.
\end{equation} where $v_i^{\theta},l_i$  and the terminal conditions solve   (\ref{rs-firstAD-1rsmaint2}) with the optimal strategies 
$$(u_1,u_2)=( [-m_{\alpha}^{1/\alpha} p_1]_0^1, [m_{\alpha}^{1/\alpha} p_2]_0^1).
$$
We investigate  (\ref{SDEuBF}) numerically  under stochastic Euler scheme (also called Euler-Maruyama scheme). We choose $\alpha=1.2.$ We set $\theta_1=0.1, \theta_2=0.3, c_1=0.8=\bar{c}_1.$The initial distribution is concentrated around two points: $1$ and $2.$ This can be extended to capture a geographical area where the attacks are concentrated in two main countries. In Figure \ref{figmain}, we plot the state distribution over time, the initial and final distribution, and the evolution of the expected values. We observe that the distribution moves progressively towards higher states. 

\begin{figure}
   \caption{Mean-field over time, the initial and final distribution, and the evolution of the expected values.}
  \centering
    \includegraphics[width=0.9\textwidth]{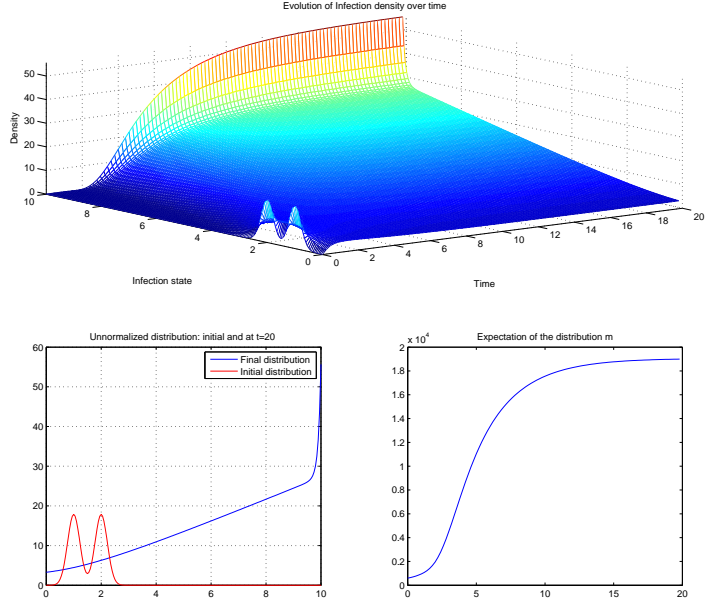}\label{figmain}
   
\end{figure}
}
\section{Concluding remarks}\label{sec:conclusion}
In this paper we have studied mean-field-type games with a drift that has $L^{\alpha}-$norm structure. Although this norm is not differentiable, it is possible to get existence of solutions. We have established relationship  between the risk-sensitive SMP and the dual functions. This allow us to verify the risk-sensitive SMP equations. When, the drift is mean-field free, we retrieve the classical risk-sensitive equations. The work can be extended in several ways. First, the interaction model in the drift $b(t,x,y,u)$ (which is pairwise interaction) can be modified to include $k-$wise interaction in the form $b(t,x,y_1,\ldots, y_k,u)$ with the measure $\prod_{l=1}^k \ m(t, dy_l). $ This is, in particular, useful for the control of virus spread over network where the interaction involves multiple nodes at a time. Second, the explicit solutions or qualitative analysis of SMP need to be conducted. Third, when $\alpha <1,$ we do not have a norm and the triangular inequality does not hold. In that case, quasi-norm type of inequalities need to established. We leave these open issues for future research.

\bibliographystyle{plain}

\begin{thebibliography}{99}
\bibitem{jova82} Jovanovic, B. (1982): Selection and the Evolution of Industry, Econometrica 50, 649-670.

\bibitem{rosenthal1988}
{B. Jovanovic and R. W. Rosenthal} (1988). { Anonymous sequential games}, { Journal of Mathematical Economics}, 
volume {17}, pages {77-87}.

 \bibitem{marriage} Dario Bauso, Ben Mansour Dia, Boualem Djehiche, Hamidou Tembine , Raul Tempone (2014), Mean-Field Games for Marriage, PLoS ONE 9(5): e94933. doi:10.1371/journal.pone.0094933

 
\bibitem{alainjoint} A. Bensoussan, , S. C. P. Yam, B. Djehiche and H. Tembine (2014), Risk-Sensitive Mean-Field-Type Control, Preprint.

\bibitem{b1}  Andersson, D. and  Djehiche, B. (2010), A maximum principle for SDE's of mean-field type. \emph{Appl. Math. Optim. } 63(3), 341-356. 

\bibitem{b2} A. Bensoussan, K. C. J. Sung, S. C. P. Yam, and S. P. Yung (2012). Linear-quadratic mean-field games. Preprint.

\bibitem{Mezerdi} Bahlali K.,  Djehiche B. and Mezerdi B. (2007), On the stochastic maximum principle in optimal control of degenerate diffusions with Lipschitz coefficients. \emph{Appl. Math. and Optim.}, 56(3), pp. 364-378.


\bibitem{b3}  Buckdahn, R., B. Djehiche, B. and Li, J. (2011), A general stochastic maximum principle for sdes of mean-field type. \emph{Applied Math. and Optimization}, 64(2), 197-216.

\bibitem{Buckdahn1}
Buckdahn R. and Li J. and Peng S. (2009),  Mean-field backward stochastic differential equations and related partial differential equations, \emph{ Stoch. Process. Appl. },119(10), 3133-3154.
\bibitem{recent1}
Chighoub, F., Mezerdi, B. (2013). A stochastic maximum principle in mean-field optimal control problems for jump diffusions,  \emph{Arab Journal of Mathematical Sciences}, Volume 19, Issue 2, July, Pages 223-241.

\bibitem{Karoui-Ham} El-Karoui, N. and Hamad\`ene, S. (2003),  BSDEs and risk-sensitive control, zero-sum and nonzero-sum
game problems of stochastic functional differential equations. \emph{Stoch. Process. Appl.} (107), 145-169.

\bibitem{recent2}  Hafayed, M. (2013), A mean-field maximum principle for optimal control of forward-backward stochastic differential equations with Poisson jump processes, International Journal of Dynamics and Control, December 2013, Volume 1, Issue 4, pp 300-315.

\bibitem{hosking} Hosking, J. (2012), A stochastic maximum principle for a stochastic differential game of a mean-field type. \emph{Appl. Math. and Optim.} 66, pp. 415-454, 2012.


\bibitem{Jac73} Jacobson, D.H. (1973), Optimal stochastic linear systems with exponential criteria and their relation to differential games. \emph{Trans. Automat. Control AC}, 18, 124-131.

\bibitem{jourdain} Jourdain, B., M{\'e}l{\'e}ard, S. and  Woyczynski, W. (2008): Nonlinear SDEs driven by L\' evy processes and related PDEs. \emph{Alea} 4, 1"-29.

\bibitem{li} Li, J. (2012), Stochastic maximum principle in the mean-field controls. \emph{Automatica}, 48, pp. 366-373.


\bibitem{lim}  Lim A E B, Zhou X (2005). A new risk-sensitive maximum principle. \emph{IEEE Trans Autom Cont},  50(7): 958-966. 

\bibitem{recent3}
Shen, Y.  and  Siu, T. K. (2013),
The maximum principle for a jump-diffusion mean-field model and its application to the mean-variance problem, \emph{ Nonlinear Analysis: Theory, Methods and Applications},
Volume 86, July, Pages 58-73.

\bibitem{shi-1} Shi, J. and Wu, Z. (2011), A risk-sensitive stochastic maximum principle for optimal control of jump diffusions and its applications. \emph{Acta Mathematica Scientia}, 31(2), pp. 419-433, 2011.


\bibitem{shi-2} Shi, J. and Wu, Z. (2012), Maximum Principle for Risk-Sensitive Stochastic Optimal Control Problem and Applications to Finance, \emph{ Stochastic Analysis and Applications}, Volume, 30, Issue 6.

\bibitem{tembine2014}
Tembine H. and Zhu Q. and Basar T. ({2014}).
{Risk-sensitive mean-field games}, \emph{IEEE Transactions on Automatic Control},  59, 4, pp. 835 - 850, DOI 10.1109/TAC.2013.2289711

\bibitem{Boualem2014}
 Djehiche B.,  Tembine H (2015) : A Stochastic Maximum Principle for Risk-Sensitive Mean-Field-Type Control, \emph{IEEE Transactions on Automatic Control}, to appear.

\bibitem{bio1} K. Kometani, H. Shimizu (1975). A study of self-organizing processes of nonlinear stochastic variables, \emph{ Journal of Statistical Physics},
December 1975, Volume 13, Issue 6, pp 473-490.

 \bibitem{Boualem2014t3}
 Djehiche B.,  Basar T. and  Tembine H :  Mean-Field-Type Game Theory, Springer, under preparation 2014.
 
\bibitem{rachev}Haynatzka V.R., Gani J., Rachev S.T (2000):
The Spread of AIDS among interactive transmission groups. Mathematical and computer modeling, 32, pp 169-180.
\bibitem{tembine2012} Tembine, H. (2012). Distributed strategic learning for wireless engineers, {\it CRC Press, Taylor \& Francis}, 496 pages.

\bibitem{alain} A. Bensoussan (2006), Explicit solutions of linear quadratic differential games, Chapter 2 in Stochastic Processes, Optimization, and Control Theory: Applications in Financial Engineering, Queueing Networks, and Manufacturing Systems,
International Series in Operations Research and Management Science Volume 94, 2006, pp 19-34.
\bibitem{alainbook} Bensoussan A., Frehse J., Yam S.C.P. ( 2013): Mean Field Games and Mean Field Type Control Theory, SpringerBriefs in
Mathematics, Springer.

\bibitem{aldous}D. Aldous (1985). Exchangeability and related topics. In Hennequin, P., Editor, Ecole d' Ete de Probabilites de Saint-Flour XIII - 1983, Springer-Verlag, Heidelberg. Lecture Notes in Mathematics 1117, pages 1-198.
\bibitem{blum} J. R. Blum, H. Chernoff, M. Rosenblatt, and H. Teicher (1958). Central limit theorems for interchangeable processes. Canad. J. Math., page 10.
\bibitem{definetti} B. de Finetti (1931). Funzione caratteristica di un fenomeno aleatorio. atti della r. academia nazionale dei lincei, serie 6. memorie, classe di scienze fisiche, mathematice e naturale. 4:251-299.
\bibitem{hewittsavage} E. Hewitt and L. J. Savage (1955). Symmetric measures on cartesian products. Transactions of the American Mathematical Society, 80:470- 501.
\bibitem{kurtz}P. Kotolenez and Kurtz T (2010). Macroscopic limits for stochastic partial differential equations of McKean-Vlasov type. Probab. Theory Relat. Fields, (146):189-222.

\bibitem{tra} J. Trashorras (2002). Large deviations for a triangular array of exchangeable random variables. Annales de l'Institut Henri Poincare, Probability and Statistics, 38: 649-680.
\bibitem{villani} C. Villani (2009). Optimal transport, old and new. 338 of Grundlehren der mathematischen Wissenschaften, Springer, ISBN 978-3-540-71049- 3.
\bibitem{ifac2014}    Cisse A.K,  Tembine H (2014): Cooperative Mean-Field Type Games, Proc. of 19th World Congress
The International Federation of Automatic Control Cape Town, South Africa. August 24-29.
\bibitem{dppnew} Lauriere   Mathieu,  Pironneau Olivier (2014), Dynamic programming for mean-field type control,
C. R. Acad. Sci. Paris, t. , Serie I.
\bibitem{onlineref} Realtime web attack monitoring (2014) at http://www.akamai.com/html/technology/real-time-web-monitor.html

\bibitem{bergin} James Bergin, Dan Bernhardt (1992), Anonymous Sequential Games with Aggregate Uncertainty, Journal of Mathematical Economics
Volume 21, Issue 6,  Pages 543-562
\bibitem{benamou}Jean-David Benamou and Yann Brenier (2000). A computational fluid mechanics
solution to the Monge-Kantorovich mass transfer problem. Numer. Math., 84(3):375-393.
 \bibitem{benamou2}J. D. Benamou and Y. Brenier (2001). Mixed L2-Wasserstein optimal mapping between prescribed density functions. J. Optim. Theory Appl., 111(2):255-271.
 \bibitem{automat} H. Tembine (2014), Risk-Sensitive Mean-Field-Type Games with $L^p-$norm Drifts, Automatica, to appear.
\bibitem{pierrelouis} Jean-Michel Lasry and Pierre-Louis Lions (2007). Mean field games. Jpn. J. Math., 2(1):229-260.

\bibitem{peter} M.Y. Huang, R.P. Malhame and P.E. Caines (2006). Large Population Stochastic Dynamic Games: Closed-Loop McKean-Vlasov Systems and the Nash Certainty Equivalence Principle, Special issue in honor of the 65th birthday of Tyrone Duncan, Communications in Information and Systems. Vol 6, Number 3, pp 221-252.

\bibitem{Boualem2014t2}
 Djehiche B. and Tembine H. (2014): A Stochastic Maximum Principle for Risk-Sensitive Mean-Field-Type Control under Partial observation.  Stochastic of Environmental and Financial Economics, CAS SEFE Proceedings.
 \bibitem{ref7}H. Tembine (2014): Energy-Constrained Mean-Field Games in Wireless Networks, Strategic Behavior and the Environment: Vol. 4: No. 2, pp 187-211. http://dx.doi.org/10.1561/102.00000040 
\end{thebibliography}

\appendix

\section*{Appendix}

\section{Mean-field convergence}
The extension of (i) the law of large numbers, (ii) central limit theorem and (iii) large deviation principle, from independent random variables  to sequences of indistinguishable random variables
has drawn the attention of a number of researchers ever since the appearance in Blum, Chernoff and co-authors \cite{blum}. Below we present some well-known results  and explain how they can be used in the McKean-Vlasov context with the $L^{\alpha}-$norm.

\subsection{Indistinguishability}

The notion of indistinguishability (or exchangeability or interchangeability) is introduced in order to discuss  
the existence of a limiting measure and mean-field convergence of the empirical measure of virtual particle states in the framework of 
de Finetti-Hewitt-Savage \cite{definetti,hewittsavage,aldous,kurtz,tra}.

Let $\mathcal{X}$ be a separable complete and metrizable topological space (Polish space).
 \begin{definition}[Indistinguishability] \label{defiindist}  
 A collection  $(x_{(1)},x_{(2)},\ldots, x_{(n)})$ of $\mathcal{X}-$valued random variables/processes, is indistinguishable (or exchangeable) if the joint law  
 is invariant by permutation over the index set $\{1,\ldots,n\},$ i.e.,
for any permutation $\sigma$ over the set  $\{1, 2, \ldots, n\}$, one has
\begin{eqnarray} \label{eqindistin}
\mathcal{L}(x_{(1)},x_{(2)},\ldots, x_{(n)})=\mathcal{L}(x_{\sigma(1)},\ldots,x_{\sigma(n)}),\
\end{eqnarray} where
$\mathcal{L}(X)$ denotes the law of the random variable $X.$ An infinite family of random variables/processes $(x^{(1)},x^{(2)},\ldots )$ is indistinguishable if every finite $n,$ the family $(x_{(1)},x_{(2)},\ldots, x_{(n)})$  is indistinguishable.
 \end{definition}

This says that the order (position) of the random variable in the family does not change the joint distribution. From  (\ref{eqindistin}) we also have that,
 for any measurable operator $O,$

\begin{eqnarray} \nonumber
\mathcal{L}\left(x_{(1)},x_{(2)},\ldots, x_{(n)},\  O(\frac{1}{n}\sum_{i=1}^n \delta_{x_{(i)}})\right)\\ \label{eqindistin3}
=\mathcal{L}\left(x_{\sigma(1)},\ldots,x_{\sigma(n)}, O( \frac{1}{n}\sum_{i=1}^n \delta_{x_{(i)}})\right),\
\end{eqnarray} where we do not permute the last component.

For indistinguishable  random variables/ processes, the convergence of the empirical measure $m_n:=\frac{1}{n}\sum_{i=1}^n\delta_{x_{(i)}}$ has been widely studied. This sits at the intersection between group theory and probability theory. The symmetry group properties have been used to derive some properties of the distributions of the processes.
The {\it de Finetti-Hewitt-Savage} theory 
provides the mean-field convergence of such a measure-valued process. 
When studying convergence of measures, an important issue is the choice of probability metric.
In order to measure the gap between two probability measures, we introduce the Wasserstein (Vasershtein) metric (also called Monge-Kantorovich metric) $d_{\alpha}$ of order ${\alpha}\geq 1.$

\begin{definition}[Wasserstein] 
$$d_{\alpha}^{\alpha}(\mu, \nu)=\inf\left\{     \int_{(x,y)} d_0( x,y)^{\alpha} \gamma(dx,dy);\right. $$ $$
\left. \gamma\in \mathcal{P}(\mathcal{X}\times \mathcal{X}),\ \gamma_x=\mu,\ \gamma_y=\nu\right\},$$
where $\gamma_x$ denotes the marginal with the respect to the $x-$component, where $d_0$ is a reference metric on $\mathcal{X}$ (such a metric exists because $\mathcal{X}$ is assumed to be metrizable). 
\end{definition}

The famous Kantorovich-Rubinstein 1958 theorem gives a dual representation of $d_1$ in terms of a Lipschitz-Bounded metric:
$$d(\mu, \nu):=d_1(\mu, \nu)=\sup\left\{   \int \phi d(\mu -\nu);\ \| \phi\|_{Lip}\leq 1\right\},$$
where $\| \phi\|_{Lip}=\| \phi\|_{\infty} +\sup_{x\neq y} \frac{| \phi(x)-\phi(y)|}{ d_0(x,y)}$ is the Lipschitz-norm of $\phi.$ 
It can be shown that $d_{\alpha}$ is a metric  (a "true" distance in a topological sense), i.e., it satisfies the axioms of a metric. 
For Polish spaces $\mathcal{X}$,  the Wasserstein distance $d_1$ is known to metrize the weak topology over $\mathcal{X}.$
As stated in Villani's book \cite{villani} the Wasserstein distance has the  following properties:
for any $1\leq {\alpha} < +\infty,$  $$\lim_n d_{\alpha}(m_n, m)=0$$ implies, in particular, that
\begin{itemize}
\item $ m_n$ converges to $ m$ in distribution  (weak convergence of probability measures) i.e., $$E_{m_n}[\phi] :=\int \phi dm_n\rightarrow  E_{m}[\phi],$$ as $n \rightarrow +\infty,$ for any measurable  bounded and Lipschitz functions $\phi.$ 
\item $ \int d_0^{\alpha}(x,y)m_n(dy) < +\infty$ for some $x\in \mathcal{X}.$
\end{itemize}

Thanks to these nice properties, the Wasserstein distance $d_{\alpha}$ is an appropriate candidate for the convergence of the empirical measure in the weak sense.

\begin{theorem}[de Finetti-Hewitt-Savage] \label{theoremsavage}
Let $x_{(1)},x_{(2)},\ldots,$ be an indistinguishable sequence of $\mathcal{X}-$valued
random variables, where $\mathcal{X}$ is a Polish space. Then, there is a $\mathcal{P}(\mathcal{X})-$valued 
random measure $m$ such that
$$
m=\lim_{m\rightarrow\infty} \ \frac{1}{n}\sum_{i=1}^n \delta_{x_{(i)}}, \ \mbox{almost surely}, 
$$
where $\mathcal{P}(\mathcal{X})$ denotes the space of probability measures on $\mathcal{X}.$  Conditioned on  $m$, the random variables $x_{(1)},x_{(2)},\ldots $ are $i.i.d$  with distribution $m$, that is, for each measurable bounded function $\phi,$
$$
\mathbb{E}\left( \phi(x_{(1)}, x_{(2)},\ldots , x_{(k)}) \ | \ m \right) $$  $$=\int \phi(y^1,\ldots, y^k) m(dy^1)\ldots m(dy^k).
$$
In addition, if the  moments of $x_{(i)}$ are finite then 
$$
d_1\left(m, \frac{1}{n}\sum_{i=1}^n \delta_{x_{(i)}}
\right)\leq \frac{C_1}{\sqrt{n}}=O\left (\frac{1}{\sqrt{n}}\right),$$ where $C_1 >0$ and $d=d_1$ denotes the Wasserstein metric of order one. 

\end{theorem}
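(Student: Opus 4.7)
The plan is to split the statement into the classical de Finetti-Hewitt-Savage content (existence of the limiting random measure $m$ and the conditional i.i.d.\ representation) and the quantitative Wasserstein-$1$ bound, which reduces in dimension one to an elementary estimate on the empirical cumulative distribution function. Throughout I denote by $\mathcal{E}_n$ the $\sigma$-algebra of Borel functions of $(x_{(1)},\ldots,x_{(n)})$ that are symmetric in their first $n$ arguments, augmented by $(x_{(n+1)},x_{(n+2)},\ldots)$; the family $(\mathcal{E}_n)$ is decreasing and its intersection is the exchangeable $\sigma$-algebra $\mathcal{E}$. For every $\phi\in C_b(\mathcal{X})$, indistinguishability forces $\mathbb{E}[\phi(x_{(i)})\mid \mathcal{E}_n]$ to be independent of $i\leq n$, hence equal to $M_n(\phi):=\frac{1}{n}\sum_{i=1}^n \phi(x_{(i)})$; thus $(M_n(\phi))$ is a bounded reverse martingale with respect to $(\mathcal{E}_n)$ and converges a.s.\ and in $L^1$ to $\mathbb{E}[\phi(x_{(1)})\mid\mathcal{E}]$. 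Since $\mathcal{X}$ is Polish, one picks a countable convergence-determining family $(\phi_j)\subset C_b(\mathcal{X})$; on the full-probability set where $M_n(\phi_j)$ converges for every $j$, the empirical measures $m_n:=\frac{1}{n}\sum_{i=1}^n\delta_{x_{(i)}}$ converge weakly to a random probability measure $m$, and $\int\phi\,dm = \mathbb{E}[\phi(x_{(1)})\mid\mathcal{E}]$ for all $\phi\in C_b(\mathcal{X})$.

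To establish the conditional i.i.d.\ structure, I apply the same device to product observables. For fixed $k\geq 1$ and bounded measurable $\phi_1,\ldots,\phi_k$, indistinguishability gives
\begin{equation*}
\frac{(n-k)!}{n!}\sum_{\substack{i_1,\ldots,i_k\in\{1,\ldots,n\}\\ \text{all distinct}}} \prod_{l=1}^k \phi_l(x_{(i_l)}) = \mathbb{E}\!\left[\prod_{l=1}^k \phi_l(x_{(l)}) \,\bigg|\, \mathcal{E}_n\right],
\end{equation*}
while this sum differs from $\prod_l M_n(\phi_l)$ by a diagonal remainder of order $O(1/n)$ (uniformly bounded by $\prod_l\|\phi_l\|_\infty$). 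Passing to the limit using the previous step yields $\mathbb{E}[\prod_l \phi_l(x_{(l)})\mid\mathcal{E}]=\prod_l \int \phi_l\,dm$, and a standard monotone class argument extends this to the stated product formula. Since $m$ is $\mathcal{E}$-measurable, conditioning on $m$ gives the same identity, which is precisely the asserted conditional independence with common law $m$.

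For the quantitative rate, note that conditionally on $m$ the $x_{(i)}$ are i.i.d.\ with law $m$ on $\mathcal{X}=\mathbb{R}$, so the Kantorovich-Rubinstein identity $d_1(\mu,\nu)=\int_\mathbb{R} |F_\mu(t)-F_\nu(t)|\,dt$ applies. For each $t$, $nF_{m_n}(t)$ is $\mathrm{Binomial}(n,F_m(t))$ given $m$, so Jensen yields $\mathbb{E}[|F_{m_n}(t)-F_m(t)|\mid m]\leq \sqrt{F_m(t)(1-F_m(t))/n}$. Fubini then produces
\begin{equation*}
\mathbb{E}[d_1(m_n,m)\mid m] \leq \frac{1}{\sqrt{n}} \int_\mathbb{R} \sqrt{F_m(t)(1-F_m(t))}\,dt,
\end{equation*}
and under the moment hypothesis the right-hand integral is finite (it is dominated by a constant plus a first-moment tail of $m$, using $\sqrt{F_m(1-F_m)}\leq \min(F_m,1-F_m)^{1/2}$ and a H\"older split of the tails). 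Taking expectation in $m$ yields the stated $O(n^{-1/2})$ bound.

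The reverse-martingale construction of $m$ is standard; the delicate point is the tail integrability of $\sqrt{F_m(1-F_m)}$, which is precisely where the moment assumption on the $x_{(i)}$ is used to turn the pointwise $O(n^{-1/2})$ variance bound into a global Wasserstein-$1$ rate with an explicit constant $C_1$. Strengthening the expectation bound to an almost-sure bound (if that is the intended reading of the statement) requires an additional concentration argument, for instance McDiarmid applied to the empirical CDF combined with a subsequence Borel-Cantelli step, which I anticipate as the main technical obstacle beyond the classical representation.
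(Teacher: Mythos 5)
The paper does not actually prove this theorem: it attributes the representation to de Finetti and Hewitt--Savage, points to Aldous (1985, pp.~18--22) for ``a simple and elegant proof,'' and disposes of the Wasserstein rate with the single sentence that it ``is obtained following the line of the law of large numbers of interacting systems.'' Your reverse-martingale argument --- conditioning on the decreasing exchangeable $\sigma$-algebras $\mathcal{E}_n$, identifying $\mathbb{E}[\phi(x_{(1)})\mid\mathcal{E}_n]$ with the empirical average, passing to the limit along a countable convergence-determining class, and then handling products of observables with an $O(1/n)$ diagonal correction --- is essentially the proof in the cited source, so for the representation part you are reconstructing the standard route rather than diverging from it. Your treatment of the rate via the one-dimensional identity $d_1(\mu,\nu)=\int_{\mathbb{R}}|F_\mu(t)-F_\nu(t)|\,dt$ and the conditional binomial variance bound is also the right (and arguably the only elementary) way to get $n^{-1/2}$, and your closing observation that the bound as literally stated --- an almost-sure inequality with a deterministic constant, rather than a bound in expectation --- would need a separate concentration argument is a genuine defect of the theorem's phrasing that the paper never addresses.

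Two caveats on the rate step. First, your parenthetical claim that $\int_{\mathbb{R}}\sqrt{F_m(1-F_m)}\,dt$ ``is dominated by a constant plus a first-moment tail of $m$'' is not correct: if $1-F_m(t)\sim t^{-2}$ the first moment is finite but $\sqrt{1-F_m(t)}\sim t^{-1}$ is not integrable. The H\"older split you also invoke is the correct device, but it requires a moment of order strictly greater than $2$; this is consistent with the theorem's hypothesis only under the reading that \emph{all} moments are finite, and you should say so explicitly rather than suggest a first moment suffices. Second, the Kantorovich--Rubinstein CDF identity, and with it the $O(n^{-1/2})$ rate, is a strictly one-dimensional phenomenon: for empirical measures of i.i.d.\ samples in $\mathbb{R}^d$ with $d\ge 3$ the $W_1$ rate degrades to $n^{-1/d}$, so the quantitative clause of the theorem is false for a general Polish $\mathcal{X}$ as stated. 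Your silent specialization to $\mathcal{X}=\mathbb{R}$ matches the setting the paper actually works in, but it should be flagged as a restriction of the statement, not a harmless convenience.
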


Note that the convergence in Theorem \ref{theoremsavage} is in the weak sense since the  Monge-Kantorovich distance $d_1$ metrizes the weak topology.
Theorem  \ref{theoremsavage} has been proved by de Finetti (1931, \cite{definetti}) for infinite binary sequences and has been extended by Hewitt and Savage (1955, \cite{hewittsavage})  to continuous and compact state spaces. A  simple and elegant proof can be found in Aldous (1985, \cite{aldous}), pp. 18-22, for the general state space. The rate of convergence for Monge-Kantorovich distance 
is obtained following the line of the law of large numbers of interacting systems. 
Theorem \ref{theoremsavage}  was  initially used  for static (time-independent) maps. Then, several applications in mathematical physics and biology, with dynamical models came into the picture. These are dynamically interacting particles, genes, molecules or nodes. Theorem \ref{theoremsavage}  was then extended to the dynamical case in at least two ways: (i)  path wise (up to a certain time step $T$),  (ii) at each time step $t.$

  \subsection{Large Deviation Principle for $m_n$}

We say that for any time $t,$ the probability measures $(m_n(t))_{n\geq 0}$ on a topological space obeys a Large Deviation Principle  with rate functions $(I(t,.))$ and in the scale  $(a_n)_n$ if  $(a_n)_n$ is a real-valued sequence satisfying $a_n \rightarrow \infty$ and $I$ is a non-negative, lower semicontinuous function such that
$$- \inf_{x\in int(B)} I(t,x) \leq  \liminf_n \frac{1}{a_n} \log m_n(t,B) $$ $$ \leq  \limsup_n \frac{1}{a_n} \log m_n(t,B) 
\leq  - \inf_{x\in cl({B})} I(t,x),$$  
for any measurable set $B$, whose interior is denoted by $int(B)$ and closure by $cl(B)$.  If the level sets $\{x : \ I(t,x) \leq  \beta \}$ are compact for every $\beta <+\infty$, $I(t,.)$ is called a {\it good rate function}. 
We introduce $\tilde{H}(.|.)$ as the  relative entropy function (defined also above)
$$
\tilde{H}(\mu | \ \nu):=\int \log(\frac{d\mu}{d\nu})\ d \mu,
$$ if $\mu$ is absolutely continuous with the respect to $\nu$ and $+\infty$ otherwise.

The main advantage of having this type of result is  the decay of $m_n(t,B)$ as $n$ gets large.
Basically, when the two limits are identical,
$m_n(t,B)$ is the order of $e^{-a_n R}$ where $R=\inf_{x\in B} I(t,x) > 0.$
As a consequence, the weak convergence from Theorem \ref{theoremsavage} and central limit  theorems can be derived  from these inequalities. 

The next result provides a large deviation principle result \cite{villani}.
\begin{theorem} 
Assume that  initially $m_{n}(0)$ follows a large deviation principle with rate $I(0, m(0))$ on the set of probability measures $\mathcal{P}(\mathcal{X}).$ Then
$(m_n(t,.))_{t\in [0,T]}$ follows a large deviation principle on the set of cadlag functions  from $[0,T]$ to $\mathcal{P}(\mathcal{X})$ with good rate
$$
I(t,m(t))=$$ $$\left\{\begin{array}{c}
I(0,m({0}))+\int_0^1 \tilde{H}(\ \dot{m}(s) \ | \ m(0)) \ \mbox{if} \ \dot{m}(s) \ \mbox{a.c.}\\
+\infty \ \mbox{otherwise}
\end{array} \right.
$$
where $a.c.$ means  absolutely continuous.
\end{theorem}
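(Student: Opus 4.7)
The plan is to adapt the Dawson--G\"artner projective-limit framework to lift the assumed initial LDP into a pathwise LDP on the Skorokhod space $D([0,T],\mathcal{P}(\mathcal{X}))$ equipped with the Wasserstein topology. The overall strategy has three stages: exponential tightness of the trajectories, identification of the local cost along the flow via a Girsanov argument, and verification of the matching upper and lower bounds.

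First, I would establish exponential tightness of the family $\{m_n(\cdot)\}_{n\geq 1}$ in $D([0,T],\mathcal{P}(\mathcal{X}))$. Indistinguishability reduces the problem to controlling the fluctuations of a single representative particle $x_{i,n}$, whose $L^\alpha$-moment control is furnished by Proposition~\ref{cond1}. Combining this with a Freidlin--Wentzell type small-noise estimate on the stochastic forward Kolmogorov equation (\ref{fpk}), and using the Aldous--Rebolledo criterion lifted to the measure-valued setting, yields exponential tightness with the required compact containment condition.

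Second, I would identify the rate function. The empirical measure satisfies a perturbed version of (\ref{fpk}) with martingale fluctuations of order $1/\sqrt{n}$. A Girsanov change of measure steering the $n$-particle system along a prescribed absolutely continuous curve $m(\cdot)$ produces a Radon--Nikodym derivative whose logarithm, after normalization by $n$, converges to an integrated local cost. The natural local cost is the relative entropy rate $\tilde{H}(\dot m(s) \mid m(0))$, and the chain rule for entropy along the flow combines it additively with the initial cost $I(0,m(0))$, giving the stated good rate function. The good-rate property follows from lower semicontinuity of $\tilde{H}(\cdot\mid\cdot)$ and compactness of sublevel sets in the Wasserstein topology.

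Third, I would close the LDP by proving the upper and lower bounds. The upper bound uses a Laplace--Varadhan calculation against smooth cylindrical test functionals on $\mathcal{P}(\mathcal{X})$, together with Chebyshev's inequality applied to the exponential martingale from the Girsanov transform; the projective limit theorem then converts cylindrical bounds into the full LDP. The lower bound is obtained by perturbing the dynamics through a feedback control that realizes any prescribed absolutely continuous target path, and reading off the cost from the Girsanov density. The main obstacle will be the non-differentiability of the $L^\alpha$-normed drift $\bar b$ on the zero set of $b$, which breaks the classical smooth Freidlin--Wentzell machinery and complicates exponential moment estimates in the Girsanov step. I would circumvent this by the same subdifferential selection used in Proposition~\ref{EU-rs-first}, approximating $\bar b$ by the smoothed drift $(\|b\|_\alpha^\alpha + \varepsilon)^{1/\alpha}$, proving the LDP for the regularized system, and then passing to the limit $\varepsilon \downarrow 0$ via the contraction principle together with uniform-in-$\varepsilon$ exponential tightness.
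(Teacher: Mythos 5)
First, a point of comparison: the paper does not actually prove this theorem. It is stated with a citation to Villani's monograph and no argument is supplied in the text, so your outline is being measured against the standard literature rather than against an in-paper derivation.

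Your three-stage plan (exponential tightness, identification of the local cost, matching upper and lower bounds) is the right general shape for a pathwise LDP, and the regularization $(\|b\|_\alpha^\alpha+\varepsilon)^{1/\alpha}$ of the non-smooth drift is a sensible device for the tightness and Girsanov steps. The genuine gap is in your second stage, which is the entire content of the theorem. For an interacting diffusion particle system, the Girsanov change of measure that steers the empirical measure along a prescribed absolutely continuous path produces a normalized log-density whose limit is a \emph{quadratic} action functional of Dawson--G\"artner/Freidlin--Wentzell type: an $H^{-1}(m(s))$-type norm of the discrepancy between $\dot m(s)$ and the adjoint of the generator of the McKean--Vlasov dynamics applied to $m(s)$, integrated in time. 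Relative-entropy local costs arise in Sanov-type settings (independent copies, or jump dynamics), not from a diffusive Girsanov density, and your sentence ``the natural local cost is the relative entropy rate $\tilde H(\dot m(s)\mid m(0))$'' is an assertion rather than a derivation; in particular, nothing in the Girsanov computation would produce the \emph{fixed} reference measure $m(0)$ inside the entropy at positive times $s$. You also leave unaddressed the fact that $\dot m(s)$, as the time derivative of a probability-measure-valued path, integrates to zero and is a signed measure, so the quantity $\tilde H(\dot m(s)\mid m(0))=\int \log(d\mu/d\nu)\,d\mu$ as the paper defines it does not obviously make sense for this argument. Until the local cost is actually computed from the exponential martingale and shown to coincide with the stated functional (or the rate is restated in the action-functional form that the computation actually yields), the proof does not close.
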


\begin{definition} 
Consider two processes $(x(t))_{t\in [0,T]}$ and $(y(t))_{t\in [0,T]}$ and set 
$$D_{T,{\alpha}}(\mu, \nu):=\inf\left\{      \left(E[\sup_{t\in [0,T]}  \ d_0( x(t),y(t))^{\alpha} ]\right)^{1/{\alpha}};\ \right. $$  $$\left. \mathcal{L}(x)=\mu,\ \mathcal{L}(y)=\nu\right\}.$$
\end{definition}
We now prove  Theorem \ref{cond1}  in several steps:
\subsection{Existence of solution to the state equation}
We start with the existence of solution. To prove existence of a solution with the respect to the Wasserstein distance, we adopt a contraction-type of approach. Then, we construct a Cauchy sequence ($L^{\alpha}$ space which is a Polish space). Consequently, the solution is almost unique. 
Consider the SDE given by
$$
x(t)= x(0) +\int_0^t \sigma(.,s,x(s)) dB(s) +$$ $$\int_0^t  \left[   \int_y |b|^{ \alpha}(., s, x(s), y,u)m(s, dy)\right]^{1/ \alpha}  \ ds =:RH(x)[m].
$$
Then one gets a  fixed-point stochastic equation  in $m$: $m= \mathcal{L}(x(t))=\mathcal{L}(RH(x)[m]).$  

Consider two measures $m_1(s, dy)$ and $ m_2(s, dy)$ such that  $D_{T, \alpha}(m_i, m_0) < +\infty$ then
$$
D_{t, \alpha}\left(   \mathcal{L}(RH(x)[m_1]), \mathcal{L}(RH(x)[m_2])  \right) $$ $$\leq 2L e^{2L t}\int_0^t  D_{s, \alpha}(m_1,m_2)\ ds,
$$
for any $t\in [0,T].$
Let $c>0, \ \Psi(f)(t)\leq c\int_0^t f(s)\ ds, \ t\in [0,T]$ for some operator $\Psi.$ and $M=\sup_{s\in [0,T]}\ f(s).$ 
 $$\Psi^2(f)(t)\leq c\int_0^t \Phi(f)(s)\ ds= c\int_0^t   [ c  M s]\ ds =c^2 \frac{t^2}{2} M.$$ 
Then, by induction, $\Phi^k(f)(t)\leq \left(  c^k \frac{T^k}{k!}\right) M,$ and
$$ D_{t, \alpha}\left(  \mathcal{L}^{k+1}(\mu_0), \mathcal{L}^{k}(\mu_0)  \right)\leq  C^k_T \frac{T^k}{k!}D_{T, \alpha}\left(  \mathcal{L}(\mu_0), \mu_0  \right) < +\infty,$$
where  $C_T= 2L e^{2L T},$ and where we have used the Lipschitz continuity  $L$ of $b$ and the Minkowski inequality for $p\geq 1.$
By summing up over natural numbers  $k,$  one gets $\sum_{k\geq k_0}D_{t, \alpha}\left(  \mathcal{L}^{k+1}(\mu_0), \mathcal{L}^{k}(\mu_0)  \right) < +\infty.$ 
Thus,  $(\mathcal{L}^{k}(\mu_0))_{k\geq k_0},\ $ is a Cauchy sequence with the respect to the metric $D_{T, \alpha},$ for $k_0\geq 1.$ Since we are in a complete metric space, this sequence converges to some fixed-point (say, $m$).
 Then $m=\mathcal{L}(RH(x)[m] )=\mathcal{L}(m)$  is the unique fixed-point, solution of the SDE.

\subsection{Pathwise mean-field convergence}
Consider $n$ independent random process $x^*_{i,n}$ satisfying 
$$
x^*_{i,n}(t)= x^*_{i,n}(0) +\int_0^t \sigma(.,s,x^*_{i,n}(s)) dB(s) $$ $$+\int_0^t  \left[   \int_y |b|^{ \alpha}(., s, x^*_{i,n}(s), y,u)m^*(s, dy)\right]^{1/ \alpha}  \ ds,
$$
$m^*(t,.)= \mathcal{L}(x^*(t)).$
and the particle representation 
$$
x_{i,n}(t)= x_{i,n}(0) +\int_0^t \sigma(.,s,x_{i,n}(s)) dB(s) $$ $$+\int_0^t  \left[   \int_y |b|^{ \alpha}(., s, x_{i,n}(s), y,u)m_n(s, dy)\right]^{1/ \alpha}  \ ds ,
$$
where 
$
m^*_n(t, .)=\frac{1}{n} \sum_{i=1}^n  \ \delta_{x^*_{i,n}(t)}.
$

It suffices to prove the statement for the coefficient with mean-field term. We take the difference between the two drift terms.
\begin{eqnarray}
D&=&\int_0^t  \left[   \int_y |b|^{ \alpha}(., s, x^*_{i,n}(s), y,u)m^*(s, dy)\right]^{1/ \alpha}  \ ds \\  &-& \int_0^t  \left[   \int_y |b|^{ \alpha}(., s, x_{i,n}(s), y,u)m_n(s, dy)\right]^{1/ \alpha}  \ ds 
\end{eqnarray}
We decompose $D$ into three separate terms as follows:
\begin{eqnarray}\nonumber 
D 
&{=}&\int_0^t  \left[   \int_y |b|^{ \alpha}(., s, x^*_{i,n}(s), y,u)m^*(s, dy)\right]^{1/ \alpha}  \ ds\\   \label{i1} &-& \int_0^t  \left[   \int_y |b|^{ \alpha}(., s, x^*_{i,n}(s), y,u)m^*_n(s, dy)\right]^{1/ \alpha}  \ ds \\   \nonumber 
&+& \int_0^t  \left[   \int_y |b|^{ \alpha}(., s, x^*_{i,n}(s), y,u)m^*_n(s, dy)\right]^{1/ \alpha}  \ ds \\  \label{i2} &-&  \int_0^t  \left[   \int_y |b|^{ \alpha}(., s, x^*_{i,n}(s), y,u)m_n(s, dy)\right]^{1/ \alpha}  \ ds \\  \nonumber
&+& \int_0^t  \left[   \int_y |b|^{ \alpha}(., s, x^*_{i,n}(s), y,u)m_n(s, dy)\right]^{1/ \alpha}  \ ds\\    \label{i3} &-&  \int_0^t  \left[   \int_y |b|^{ \alpha}(., s, x_{i,n}(s), y,u)m_n(s, dy)\right]^{1/ \alpha}  \ ds \\
D&=& I_1+I_2 +I_3
\end{eqnarray}
The first term $I_1$  (see (\ref{i1})) deals only with i.i.d random variables. Therefore, the convergence for that part is classical. For the second term $I_2,$ we use 
the triangular inequality for $ \alpha\geq 1.$ By Lipschitz continuity of $b$ , we get
        $ | I_2|\leq \frac{L}{n^{\frac{1}{ \alpha}}} \int_0^t  \left[   \sum_{i=1}^n \|x^*_{i,n}(s)-x_{i,n}(s)| \|^{ \alpha} \right]^{1/ \alpha}\ ds .$
        By Lipschitz continuity of $b,$
$ | I_3 |\leq  \int_0^t  |x^*_{i,n}(s`)-x_{i,n}(s)| \ ds.$

 By H\"{o}lder inequality
 \begin{eqnarray} \label{i4} 
   && \left\{E[\sup_{s\in [0,T]} |x^*_{i,n}(s)-x_{i,n}(s)|^{ \alpha}] \right\}^{1/ \alpha} \\ \nonumber
   &\leq&  
 \int_0^T \left\{ E\| Y_{n,s}  \|^{ \alpha}\right\}^{1/ \alpha}\ ds
 \\ \nonumber
  &+& \frac{L}{n^{\frac{1}{ \alpha}}} \int_0^T \left[   E\sup\|\sum_{j=1}^n |x^*_{j,n}(s)-x_{j,n}(s)| \|^{ \alpha} \right]^{1/ \alpha}ds \\ \nonumber
  && \nonumber +
  L \int_0^T \{ E[\sup_{s\in [0,t]} |x^*_{i,n}(s)-x_{i,n}(s)|^{ \alpha}]\}^{1/ \alpha}\ dt
\end{eqnarray} 
where \begin{eqnarray} \nonumber
Y_{n,s}&=&
    \left[\int_y |b|^{ \alpha}(., s, x^*_{i,n}(s), y,u)m^*(s, dy)\right]^{1/ \alpha} \\ && -    \left[\int_y |b|^{ \alpha}(., s, x^*_{i,n}(s), y,u)m^*_n(s, dy)\right]^{1/ \alpha} 
\end{eqnarray}
Summing over $i$ and using Gronwall Lemma yields

 $$\{E[\sup_{s\in [0,T]} |x^*_{i,n}(s)-x_{i,n}(s)|^{ \alpha}] \}^{1/ \alpha}$$ $$\leq 2L e^{2LT} 
 \int_0^T \left\{ E\|     Y_{n,s}  \|^{ \alpha}\right\}^{1/ \alpha}\ ds.$$
 which provides a convergence rate of $\sqrt{n}$ since  $$\ \sup_n \sqrt{n}\left\{ E\sup_{s\in [0,T]}\|     Y_{n,s}  \|^{ \alpha}\right\}^{1/ \alpha} <+\infty.$$
 This result shows that when $ \alpha\geq 1,$ and when the initial distributions of the virtual particles are mutually independent, with same distribution  as $x(0),$ then
 the particle interaction model with fixed control $u$  soon destroys that independence through the empirical measure $m_n$. 
 But, for a given finite time $t$, when the number of particle becomes large,  the mean-field convergence implies that the distributions
  become approximately independent again conditioning on $m(t,.)$, so that independence is still retained. This is called {\it propagation-of-chaos}.
  Note that this result is limited to finite horizon. For long-term behavior one needs to study the asymptotics (infinite horizon in time) of the SDEs in order to 
  derive propagation (or non-propagation) of chaos property.

\vspace{1cm}

  \newpage

  {\bf Hamidou Tembine} received his M.S. degree in Applied Mathematics from Ecole Polytechnique in 2006 and his Ph.D. degree in Computer Science  from University of Avignon in 2009. His current research interests include evolutionary games, mean field stochastic games and applications.  In 2014, Tembine received the IEEE ComSoc Outstanding Young Researcher Award for his promising research activities for the benefit of the society. He was the recipient of 5 best paper awards in the applications of game theory. Tembine is a prolific researcher and holds several scientific publications including magazines, letters, journals and conferences. He is author of the book on distributed strategic learning for engineers (published at CRC Press, Taylor \& Francis 2012), and co-author of the book Game Theory and Learning in Wireless Networks (Elsevier Academic Press). Tembine has been co-organizer of several scientific meetings on game theory in networking, wireless communications and smart energy systems.   He is a senior member of IEEE.

\end{document}